\definecolor{c1}{rgb}{0,0,1}
\definecolor{c2}{rgb}{0,0.3,0.9}
\definecolor{c3}{rgb}{0.3,0.9}
\def\XXint#1#2#3{{\setbox0=\hbox{$#1{#2#3}{\int}$ }
\vcenter{\hbox{$#2#3$ }}\kern-.6\wd0}}
\theoremstyle{plain}
\newtheorem{theorem}{Theorem}[section]
\theoremstyle{definition}
\theoremstyle{lemma}
\newtheorem{lemma}[theorem]{Lemma}
\theoremstyle{Remark}
\theoremstyle{proposition}
\theoremstyle{corollary}
\newtheorem{corollary}[theorem]{Corollary}
\theoremstyle{example}
\theoremstyle{assumption}
\begin{document}
\pagestyle{empty}

\title{Long time behavior of Fokker-Planck equations for bosons and fermions }

\author{Anton Arnold\thanks{Institut f\"ur Analysis und Scientific Computing, Technische Universit\"at Wien, Wiedner Hauptstr. 8-10, A-1040 Wien, Austria, {\tt anton.arnold@tuwien.ac.at.}}, \,\, Marlies Pirner\thanks{Institut f\"ur Analysis und Numerik, Fachbereich Mathematik
und Informatik der Universit\"at M\"unster, Orl\'eans-Ring 10, 48149 M\"unster, Germany, {\tt marlies.pirner@uni-muenster.de.}}, \,\, Gayrat Toshpulatov\thanks{Institut f\"ur Analysis und Numerik, Fachbereich Mathematik
und Informatik der Universit\"at M\"unster, Orl\'eans-Ring 10, 48149 M\"unster, Germany, {\tt  gayrat.toshpulatov@uni-muenster.de}.}}

\maketitle
\pagestyle{plain}

\begin{abstract} 
This paper is concerned with space inhomogeneous quantum Fokker-Planck equations posed on a classical kinetic phase space. The nonlinear factor $f(1\pm f)$ appears both in the transport term and in the collison part of the Fokker-Planck operator, accounting for the inclusion principle of bosons and the exclusion principle of fermions. 

Assuming that global solutions exist, we prove exponential decay of the solutions to the global equilibrium in a weighted $L^2$-space without a close-to-equilibrium assumption. Our analysis is in the spirit of an $L^2$-hypocoercivity method. Our main Lyapunov functional is constructed from a logarithmic relative entropy and the (nonlinear) projection of the solution to the manifold of local-in-$x$ equilibria. 

\end{abstract}
\textcolor{black}{\begin{small}\textbf{Keywords:} Semi-classical kinetic equations, Fokker-Planck equation, fermion, boson, long time
behavior, convergence to equilibrium, hypocoercivity, Lyapunov functional.\\
\textbf{2020 Mathematics Subject Classification:} 35Q84, 35B40, 35Q40, 35Q82, 82C40.
\end{small}}
\tableofcontents
\section{Introduction}
One way to study the statistical evolution of rarefied gases of particles consists in writing down kinetic equations for the distribution function $f$ of the particles in phase space. These equations consist of transport and collision parts.  With the invention of quantum mechanics, kinetic  equations have been modified in order to describe also quantum statistical effects like  Pauli's exclusion-inclusion principle. This approach was initiated by the physicists Nordheim \cite{Nordh}, Uehling and
Uhlenbeck \cite{U.U}.  Several kinetic equations for interacting particles with Pauli's exclusion-inclusion principle, such as fermions and bosons, have been introduced in the literature, see  e.g.\,\cite{Book, Book1, Dolb, E.M1, E.M.Vel, Lu.Wenn, Vhb}. 
 A notable feature of quantum kinetic equations is the appearance of  the term $1\pm f$ in their collision part.
It was shown in \cite{Contr} that adding the term $1\pm f$ only to the collision part of the kinetic equations can be insufficient to describe accurately quantum systems far from equilibrium. To deal with this problem, Balazs \cite{Der} and  
Kaniadakis \cite{K} (see also \cite{KQ1, KQ2}) introduced the following kinetic model which includes the term $1\pm f$ not only in the collision part of Fokker-Planck type but also in the transport part 
\begin{equation}\label{Eq}
\begin{cases}
\partial_t f+p\cdot \nabla_x (f+\kappa f^2)=\text{div}_{p}(\nabla_pf+p(f+\kappa f^2))\\
f(0,x,p)=f_0(x,p).
\end{cases}
\end{equation}
The term $1+\kappa f$  appearing in \eqref{Eq} takes into account the exclusion principle for fermions when $\kappa=-1$ and the inclusion principle for bosons when $\kappa=1.$ Moreover, the values of the distribution function have to respect  the bounds $
0\leq f\leq 1$
when $\kappa=-1.$ Clearly, for $\kappa=0,$ \eqref{Eq} reduces to the classical, kinetic Fokker-Planck equation.

In this paper, we consider this  nonlinear, spatially inhomogeneous kinetic equation. Here the variable $t\geq 0,$ $x \in \mathbb{T}^d,$ and $p \in \mathbb{R}^d,$ respectively, stand for time, spatial position, and momentum. The unknown $f(t,x,p)\geq 0$ describes the time evolution of  the distribution function of indistinguishable quantum particles, which are fermions or bosons. 
 
 While \eqref{Eq} may be called a "quantum Fokker-Planck equation", we note that several other models from the literature are also referred to by this very name. Many of them are either evolution equations for the quantum kinetic Wigner function \cite{Wig1, JMH} or evolution equations in Lindblad form for the density matrix \cite{Wig2, Wig3}. While a Wigner-Fokker-Planck equation (without external potential) would be very similar to \eqref{Eq} with the choice $\kappa=0$, they have intrinsically different settings: A Wigner equation is posed on a quantum mechanical phase space and, when describing an open quantum system, it (or its density matrix analog) has to be in Lindblad form \cite{Wig1}. By contrast, the Fokker-Planck equations \eqref{Eq} are posed in a classical phase space, and they do not correspond to the Lindblad form.

For both values of $\kappa,$ Equation \eqref{Eq} satisfies the  following  physical principles. First,
whenever $f(t,x,p)$ is a (well-behaved) solution of \eqref{Eq}, one has  \textit{conservation of mass:}
\begin{equation*}
\int_{\mathbb{T}^d}\int_{\mathbb{R}^d}f(t,x,p)dpdx=\int_{\mathbb{T}^d}\int_{\mathbb{R}^d}f_0(x,p)dpdx, \, \, \, \, \forall\, t\geq 0.
\end{equation*}
Next, Equation \eqref{Eq}  is \emph{dissipative} in the sense that the following entropy functional  decreases under time-evolution of $f:$ we define  
\begin{equation*}\mathrm{H}[f]\colonequals  \int_{\mathbb{T}^d}\int_{\mathbb{R}^d} \frac{|p|^2}{2}f dpdx+\int_{\mathbb{T}^d}\int_{\mathbb{R}^d} \left(f \ln{f}-\kappa(1+\kappa f)\ln{(1+\kappa f)}\right)dpdx,
\end{equation*}
which is bounded below inspite of the non-positive second summand (see \cite[Lemma 3.1]{FDFP} for $\kappa=-1$ and \cite[Lemma 4.5]{2D}  for $\kappa=1$). 
If  $f$  is a regular enough solution, one obtains for all $t>0,$\begin{align}\label{dt H2}
\frac{d}{dt}\mathrm{H}[f(t)]=-\int_{\mathbb{T}^d}\int_{\mathbb{R}^d}(f+\kappa f^2)\left|\nabla_p \ln\left(\frac{f}{M(1+\kappa f)}\right)\right|^2dpdx\leq 0,
\end{align}
where $M(p)\colonequals \frac{1}{(2\pi )^{d/2}}e^{-\frac{|p|^2}{2}}.$
 This means that the functional  $\mathrm{H}[f(t)]$ is a decreasing function of $t\geq 0,$ hence it is a Lyapunov functional for \eqref{Eq}. The entropy dissipation functional  (the integral on the right hand side of \eqref{dt H2})  vanishes if and only if $\frac{f}{M(1+\kappa f)}$ does not depend on $p.$ This means 
\begin{equation}\label{loc}
f(t,x,p)=\frac{\beta(t,x) M(p)}{1-\kappa \beta(t,x)M(p)}
\end{equation}
for some function $\beta=\beta(t,x).$ Hence, any function  $f$ in the form of \eqref{loc} is a \emph{local equilibrium} for \eqref{Eq}. For such $f, $ the left hand side of \eqref{Eq} vanishes if $\beta$ is constant. Hence,   the unique  \emph{ global equilibrium} is 
\begin{equation}\label{eqilib}
f_{\infty}(p)\colonequals \frac{\beta_{\infty}M(p)}{1-\kappa \beta_{\infty}M(p)},
\end{equation}
 where $\beta_{\infty}>0$ is uniquely determined by mass conservation $$\int_{\mathbb{T}^d}\int_{\mathbb{R}^d}f_{0}(x,p)dpdx=\displaystyle |\mathbb{T}^d|\int_{\mathbb{R}^d}f_{\infty}(p)dp.$$  
$f_{\infty}(p)$ is called the \emph{Fermi-Dirac} distribution when $\kappa=-1$ and the \emph{Bose-Einstein} distribution when $\kappa=1.$
We note that $f_{\infty}$ is not always well-defined when $\kappa=1$ and $d\geq 3:$ since $f_{\infty}$ is supposed to be non-negative and have finite values, $\beta_{\infty}$ needs to be in the interval $(0, (2\pi )^{d/2}),$ see \cite{2D}. This means $0\leq f_{\infty}(p)\leq \frac{e^{-|p|^2/2}}{1-e^{-|p|^2/2}}$ for all $p \in \mathbb{R}^d,$ hence  the mass of $f_{\infty}$ can not be larger than
\begin{equation}\label{m_c}m_c\colonequals \int_{\mathbb{T}^d}\int_{\mathbb{R}^d}\frac{e^{-|p|^2/2}}{1-e^{-|p|^2/2}}dpdx=|\mathbb{T}^d|\int_{\mathbb{R}^d}\frac{e^{-|p|^2/2}}{1-e^{-|p|^2/2}}dp.
\end{equation}
One can check that   $m_c=\infty$ if $d=1,2$ and $m_c<\infty$ if $d\geq 3.$ Therefore, there is exactly one global equilibrium for each fixed finite mass in dimension $d=1,2.$ However, in dimension $d\geq 3,$   there is no (smooth) global equilibrium if the mass of $f_0$ is larger than $m_c.$

On the basis of the decay of $\mathrm{H}[f(t)],$ one can conjecture  that $f(t)$ converges to the global equilibrium  $f_{\infty}$ as $t\to \infty.$   Thus, it is an interesting problem to prove (or disprove) this convergence and to estimate the convergence rate. Our goal is to study the long time behavior of the  solution $f(t),$  and to prove that $f(t)$ converges exponentially to $f_{\infty}.$

If we simplify \eqref{Eq} by neglecting the dependence of the solution on the $x$ variable, then we get the spatially homogeneous equation
\begin{equation}
\label{Eqhom}
\begin{cases}
\partial_t f(t,p)=\text{div}_{p}(\nabla_pf(t,p)+p(f(t,p)+\kappa f^2(t,p)))\\
f(0,p)=f_0(p).
\end{cases}
\end{equation} 
 This equation has been studied comprehensively: exponential decay of the solution to the equilibrium in dimension $d=1$  was proven in \cite{1D} using the "entropy-entropy dissipation" method \cite{EED, EEDn}. The analogous convergence result and well-posedness in higher dimensions  were obtained in \cite{FDFP} for fermions and $d\in \mathbb{N},$ and in \cite{2D} for bosons and $d=2.$  As we mentioned before, for the bosonic case, i.e. $\kappa=1,$ there is no  global equilibrium if $f_0$ has mass larger than the critical mass $m_c$ (see \eqref{m_c}) in dimension $d\geq 3.$ In this case, it is conjectured
that solutions  will blow up in   finite time. This  problem was studied only for the  spatially homogeneous equation \eqref{Eqhom}:  Toscani \cite{To} proved that in dimension
$d\geq 3$ there are certain solutions which blow up if the initial mass is sufficiently large or, for any given
supercritical mass, if the initial second moment is small enough. The continuation beyond this blow-up and long time behavior was studied in \cite{Num, Hopf}. 
 
 Concerning the spatially inhomogeneous equation \eqref{Eq}, there are only few studies: existence of smooth solutions and their exponential convergence  to the global equilibrium was proven in \cite{NS}  when the initial data  is close to equilibrium  in a suitable Sobolev space. This 
result is based on the $H^1$ hypocoercivity method developed in previous works \cite{NM, V}. Luo and Zhang \cite{Chi} considered \eqref{Eq}  in whole space, i.e. $x \in \mathbb{R}^3. $ Using the high-order Sobolev energy method developed by Guo \cite{Guo}, they obtained  algebraic decay rates   under the assumption that the initial data is close to equilibrium  in a Sobolev space.

In this paper, we shall improve these previous results. We prove exponential decay of the solution  to the global equilibrium without the close-to-equilibrium assumption.
For the spatially homogeneous equation \eqref{Eqhom} it is known that explicit estimates on convergence to equilibrium can be obtained by a direct study of the entropy and its  dissipation functional. Since $f=f(t,p)$ does not depend on $x$ in this case, with the help of convex Sobolev inequalities \cite{EEDn} one can obtain a Gr\"onwall inequality for the relative entropy. This implies exponential convergence of the solution to the global equilibrium, see \cite{1D, FDFP, 2D}. Unfortunately,  this idea cannot be used for the spatially inhomogeneous  equation \eqref{Eq}. The reason is that one loses information about the $x-$direction in the dissipation functional. More precisely, there is no gradient with respect to $x$ in the dissipation functional (see \eqref{dt H2}), hence a Sobolev inequality does not hold for the entropy and its dissipation functional.

 To deal with this problem, in  recent years, many new so-called hypocoercivity methods have been introduced to study the long-time behavior of spatially inhomogeneous kinetic equations.
The challenge of hypocoercivity is to understand the interplay between the collision operator that provides dissipativity in the velocity variable and the transport operator which is conservative, in order to obtain global dissipativity for the whole problem. 
There are several hypocoercivity methods, see e.g. \cite{NM, V, Guo, DMS, An, AT1, AT2,  Har}. In this work we use the $L^2$-hypocoercivity method which was
introduced in \cite{Herau, DMS} for linear kinetic equations which only conserve mass. This method was developed further in \cite{GuoL2, Max, L2} for linear kinetic equations with several conserved quantities (mass, momentum, energy). 
The main motivation of this work is to provide an extension of the hypocoercivity results  from \cite{NS, Chi} for the model \eqref{Eq}: We shall prove  exponential decay of the solution  to the global equilibrium in $L^2$ without the close-to-equilibrium assumption on the initial data. 
Motivated by the recent results \cite{FPS, PT, G.T}, we construct a Lyapunov functional for \eqref{Eq} by  modifying the  $L^2$-hypocoercivity method.
The idea consists in constructing a Lypunov functional   which  is equivalent to the square of the $L^2-$norm and satisfies a Gr\"onwall inequality. This implies  exponential decay of the solution to the global equilibrium. This functional is constructed by using the relative entropy and the projection of  the solution to the manifold of local equilibria.  In general, for semi-classical kinetic equations this projection operator  is nonlinear (see \eqref{pr} below). This makes our analysis challenging. 

The organization of this paper is as follows. In Section 2 we state the main result and explain the main steps of the proof. In Section 3 we study some properties of weak solutions. We show that such solutions conserve mass and their $L^1-$difference is contracting as time progresses. In Section 4 we establish some estimates   for the projection operator and the  entropy dissipation, construct a Lyapunov functional, and   prove the main result. 
\section{Main result}
In this paper we consider the long time behavior of global weak solutions.
We define a global weak solution
\begin{equation}\label{f}
f\in C([0, \infty), L^1 \cap L^{2}(\mathbb{T}^d\times \mathbb{R}^d))
\end{equation} of \eqref{Eq}, when  satisfying\footnote{We remark that this notion of \emph{weak solution} in \eqref{dist} is the continuous analog of an \emph{ultra-weak variational formulation} as used for numerical discretizations of elliptic problems, see \cite{Cessenat-Despret}.} \begin{equation}\label{dist}
\int_{0}^{\infty}\int_{\mathbb{T}^d}\int_{\mathbb{R}^d}f\big[\partial_t \varphi+(1+\kappa f)p\cdot \nabla_x \varphi-(1+\kappa f)p\cdot \nabla_p \varphi+\Delta_p \varphi\big]dpdxdt+\int_{\mathbb{T}^d}\int_{\mathbb{R}^d}\varphi_{|t=0}f_0dpdx=0
\end{equation}
  for all $\varphi\in C^{\infty}_{c}([0,\infty)\times \mathbb{T}^d\times \mathbb{R}^d).$  It is natural to require the condition  \eqref{f} on $f,$ because it implies that  all  terms of \eqref{dist} are finite for all $\varphi\in C^{\infty}_{c}([0,\infty)\times \mathbb{T}^d\times \mathbb{R}^d).$ 
 
 As the existence of smooth or weak solutions to \eqref{Eq} has not been established yet, our subsequent analysis is under the assumption that they exist globally in time.  But, at least, we can apply our results to  global classical solutions near
the equilibrium in a torus  constructed by Neumann, Sparber \cite{NS} and Luo, Zhang \cite{Chi}. 
 
We introduce the weighted $L^2-$space 
\begin{equation*}
L^2(\mathbb{T}^d\times \mathbb{R}^d, M^{-1})\colonequals \left\{g :\mathbb{T}^d\times \mathbb{R}^{d} \to \mathbb{R}: g \text{ is measurable and } \, \, \int_{\mathbb{T}^d}\int_{\mathbb{R}^d}\frac{g^2}{M}dpdx< \infty\right\}
\end{equation*}
with the scalar product
 \begin{equation*}
\langle g_1, g_2\rangle\colonequals \int_{\mathbb{T}^d}\int_{\mathbb{R}^d}\frac{g_1g_2}{M}dpdx, \, \, \, \, \, \, g_1,g_2 \in L^2(\mathbb{T}^d\times \mathbb{R}^d, M^{-1}) 
\end{equation*}
and the induced norm $||\cdot||.$

 Our main result is the following:
\begin{theorem}\label{main th}
 Assume there exist constants $\beta_->0$ and $\, \beta_+>0$ such that $f_0$ satisfies
\begin{equation}\label{f_0}
\frac{\beta_{-} M(p)}{1-\kappa \beta_{-} M(p)}\leq f_0(x,p)\leq \frac{\beta_{+} M(p)}{1-\kappa \beta_{+} M(p)} 
\end{equation}
for all $x \in \mathbb{T}^d$ and $p\in \mathbb{R}^d.$ We also require $\beta_{+}<(2\pi )^{d/2}$ when $\kappa=1.$ Let $f\in C([0,\infty), L^1\cap L^{\infty}(\mathbb{T}^d\times \mathbb{R}^d))$ be a weak solution to \eqref{Eq} such that
\begin{equation}\label{reg}
\nabla_x f, \nabla_p f  \in L^2_{loc}([0,\infty),  L^1\cap L^2(\mathbb{T}^d\times \mathbb{R}^d)).
\end{equation}
 Then  there exist  explicitly computable constants $\lambda>0$ and $ c\geq 1$ (independent of $f_0$ but depending on $\beta_{-}$ and $ \beta_{+}$)  such that
\begin{equation}\label{lamda}
||f(t)-f_{\infty}||\leq c e^{-\lambda t}||f_0-f_{\infty}||, \, \, \, \, \forall\, t\geq 0.
\end{equation}
\end{theorem}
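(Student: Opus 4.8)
The plan is to construct a Lyapunov functional $\mathcal{L}[f]$ that is equivalent to $\|f-f_\infty\|^2$ and obeys a Gr\"onwall inequality $\frac{d}{dt}\mathcal{L}[f(t)]\le -\lambda\,\mathcal{L}[f(t)]$, from which \eqref{lamda} follows immediately. The first ingredient is a set of uniform pointwise bounds. The two functions $f_\pm(p):=\frac{\beta_\pm M(p)}{1-\kappa\beta_\pm M(p)}$ appearing in \eqref{f_0} are $x$-independent stationary solutions of \eqref{Eq}, so by an order-preservation (comparison) property of the equation---of the same nature as the $L^1$-contraction established for weak solutions---I expect the sandwiching \eqref{f_0} to propagate, giving $f_-\le f(t,x,p)\le f_+$ for all $t,x,p$. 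These bounds are the workhorse of the whole argument: they force the nonlinear factor $f(1+\kappa f)$ to be comparable to $M$ from above and below with constants depending only on $\beta_\pm$, making all nonlinear coefficients uniformly bounded and converting every entropy-type quantity into an equivalent weighted $L^2$ quantity.

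For the main part of $\mathcal{L}$ I would use the relative entropy $\mathcal{H}[f|f_\infty]:=\mathrm{H}[f]-\mathrm{H}[f_\infty]$. Writing $\psi(s)=s\ln s-\kappa(1+\kappa s)\ln(1+\kappa s)$, one has $\psi'(s)=\ln\frac{s}{1+\kappa s}$ and $\psi''(s)=\frac{1}{s(1+\kappa s)}$ (using $\kappa^2=1$); since $\frac{|p|^2}{2}+\psi'(f_\infty)$ is constant in $p$ and $\int\!\int(f-f_\infty)=0$ by mass conservation, $\mathcal{H}[f|f_\infty]$ equals the Bregman divergence $\int\!\int\big(\psi(f)-\psi(f_\infty)-\psi'(f_\infty)(f-f_\infty)\big)$. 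As $\psi''$ is bounded above and below on $[f_-,f_+]$, the uniform bounds yield $\mathcal{H}[f|f_\infty]\approx\|f-f_\infty\|^2$. From \eqref{dt H2}, $\frac{d}{dt}\mathcal{H}[f|f_\infty]=-D[f]$ with $D[f]=\int\!\int\frac{1}{f(1+\kappa f)}|\nabla_p f+p(f+\kappa f^2)|^2$; a convex Sobolev inequality in $p$, applied at each fixed $x$ as in the spatially homogeneous analysis \cite{1D, FDFP, 2D}, bounds $D[f]$ below by the $x$-integral of the relative entropy of $f(x,\cdot)$ with respect to its local equilibrium, hence by $c_1\|f-\Pi f\|^2$, where $\Pi f=\frac{\beta M}{1-\kappa\beta M}$ is the nonlinear projection onto local equilibria determined by $\int\Pi f\,dp=\int f\,dp=\rho$. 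Thus entropy decay alone controls only the microscopic deviation $f-\Pi f$.

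The decisive point is that $D[f]$ is blind to the macroscopic deviation $\Pi f-f_\infty$, equivalently to $\beta(\cdot)-\beta_\infty$. Following the $L^2$-hypocoercivity method \cite{Herau, DMS}, I would add a correction $\varepsilon\,\mathcal{C}[f]$ that extracts coercivity in $x$. Here the local equilibria vary only through the scalar fugacity $\beta(x)$ (the Gaussian $M$ has fixed temperature), so the macroscopic variable is the single density $\rho$, obeying the continuity equation $\partial_t\rho+\nabla_x\!\cdot J=0$ with flux $J=\int p(f+\kappa f^2)\,dp$. I would introduce an auxiliary potential $\Phi(t,x)$ on $\mathbb{T}^d$ solving a uniformly elliptic problem with right-hand side $\rho-\rho_\infty$, and define $\mathcal{C}[f]$ by pairing $\nabla_x\Phi$ against the flux (equivalently against $f-\Pi f$). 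Differentiating $\mathcal{C}$ along \eqref{Eq} and using the continuity equation should produce a coercive term $-c_2\|\rho-\rho_\infty\|_{L^2_x}^2$---Poincar\'e on the torus applies since $\int(\rho-\rho_\infty)\,dx=0$---plus remainders that are quadratic in $f-\Pi f$ or carry the elliptic regularity of $\Phi$.

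Finally I set $\mathcal{L}[f]=\mathcal{H}[f|f_\infty]+\varepsilon\,\mathcal{C}[f]$. For $\varepsilon$ small the correction is dominated by the entropy, so $\mathcal{L}[f]\approx\|f-f_\infty\|^2$; and combining $-c_1\|f-\Pi f\|^2$ with $-\varepsilon c_2\|\rho-\rho_\infty\|^2$ and absorbing cross terms gives $\frac{d}{dt}\mathcal{L}\le-\lambda'(\|f-\Pi f\|^2+\|\rho-\rho_\infty\|^2)\le-\lambda\mathcal{L}$, whence Gr\"onwall and the equivalence yield \eqref{lamda}. I expect the main obstacle to be the nonlinearity of both the projection $\Pi$ and the transport term $p\cdot\nabla_x(f+\kappa f^2)$: the time derivative of $\mathcal{C}$ and the interaction of the corrector with the nonlinear transport generate error terms (in $\partial_t\beta$, $\nabla_x\beta$, and products of microscopic and macroscopic parts) that are not sign-definite. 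The way to close the estimate is that the comparison principle forces $\beta_-\le\beta(x)\le\beta_+$, so these nonlinear coefficients are uniformly bounded and the linear hypocoercivity constants survive with controlled dependence on $\beta_\pm$; choosing $\varepsilon$ sufficiently small then absorbs the residual cross terms into the two coercive contributions.
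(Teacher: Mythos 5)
Your proposal follows essentially the same route as the paper's proof: propagate the bounds \eqref{f_0} in time via the $L^1$-contraction/comparison principle, show the relative entropy is equivalent to $\|f-f_\infty\|^2$ by a Taylor/Bregman argument, use the generalized log-Sobolev inequality to extract $-c_1\|f-\Pi f\|^2$ from the entropy dissipation, and add a small multiple of $\int_{\mathbb{T}^d}\nabla_x\phi\cdot j\,dx$ with $-\Delta_x\phi=\rho-\rho_\infty$ to gain macroscopic coercivity (which the paper obtains directly as $-\int_{\mathbb{T}^d}(\rho-\rho_\infty)^2dx$ together with the equivalence \eqref{rr}, so no Poincar\'e inequality is actually needed), then close by Gr\"onwall. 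The only substantive point your outline leaves formal is the justification of all these time-derivative computations for weak solutions with regularity \eqref{reg}, which the paper carries out via mollification, cut-offs in $p$, and the Lions--Magenes lemma.
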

 We note that \eqref{f_0} implies $\beta_{-}\leq \beta_{+}.$ Hence, the condition $\beta_{+}<(2\pi )^{d/2}$ when $\kappa=1$ in Theorem \ref{main th} implies  that the functions on the left and right hand sides of  \eqref{f_0} do not have a singularity at $p=0.$ 

\bigskip

The rest of the paper is devoted to the proof of Theorem \ref{main th}, structured in several  steps. Although we shall consider weak solutions, for illustration purposes,  we first explain these steps here for smooth solutions.
The first important step consists in proving the contraction of the $L^1-$difference of any two solutions, i.e., if $f$ and $g$ are two solutions, then
$$||f(t)-g(t)||_{L^1}\leq ||f_0-g_0||_{L^1},\,\,\,\, \forall\, t\geq 0$$
 (see Lemma \ref{max.pri}). Under the assumptions of Theorem \ref{main th}, this lets us prove that the solution $f$ satisfies  \begin{equation}\label{*1}\frac{\beta_{-} M(p)}{1-\kappa \beta_{-} M(p)}\leq f(t,x,p)\leq \frac{\beta_{+} M(p)}{1-\kappa \beta_{+} M(p)}
  \end{equation} for all $x \in \mathbb{T}^d,$  $p\in \mathbb{R}^d,$ and $t\geq 0$ (see Corollary \ref{Cor:bound}).  This bound plays an essential role in our estimates, in particular, this bound makes the relative entropy $$\mathrm{H}[f|f_{\infty}]\colonequals  \int_{\mathbb{T}^d}\int_{\mathbb{R}^d} \left(f \ln{\frac{f}{f_{\infty}}}-\kappa(1+\kappa f)\ln{\frac{1+\kappa f}{1+\kappa f_{\infty}}}\right)dpdx$$ equivalent to  $||f-f_{\infty}||^2$ (see Lemma \ref{C<H<C}). Next, we construct a Lyapunov functional $\mathrm{E}[f|f_{\infty}]$ by modifying the relative entropy, 
\begin{equation}\label{some}
\mathrm{E}[f|f_{\infty}]=\mathrm{H}[f|f_{\infty}]+\text{some terms}.
\end{equation}
We want this functional still to be equivalent to $||f-f_{\infty}||^2$ and to satisfy 
\begin{equation}\label{dt E C}
\frac{d}{dt} \mathrm{E}[f|f_{\infty}]\leq -2C ||f-f_{\infty}||^2.
\end{equation}
Here  and in the following estimates, $C$ denotes  (possible different) positive constants.
 The idea of constructing this functional is the following: Using a generalized log-Sobolev inequality and the bounds \eqref{*1} on $f,$ we can estimate 
\begin{equation}\label{dtHint}
\frac{d}{dt} \mathrm{H}[f|f_{\infty}]\leq -C||f-\Pi f||^2,
\end{equation}
where $\Pi f$ is the (nonlinear) projection of $f$ to the manifold of local equilibria. It is defined by \begin{equation*}
\Pi f(t,x,p)\colonequals\frac{\beta(t,x)M(p)}{1-\kappa \beta(t,x)M(p)},
\end{equation*}
where $\beta(t,x)$ is uniquely chosen by the condition $\displaystyle 
\int_{\mathbb{R}^d}\Pi fdp=\int_{\mathbb{R}^d} fdp.$ Certainly, the estimate in \eqref{dtHint} is not enough to get \eqref{dt E C}. Therefore, \textit{"some terms"} in \eqref{some} has to be chosen so that its time derivative provides $\displaystyle -C||\Pi f-f_{\infty}||^2.$   The reason is that the sum of this term and  the right hand side of  \eqref{dtHint}  can be estimated by $-C||f-f_{\infty}||^2,$ i.e., the triangle inequality provides  
$$-||f-\Pi f||^2-||\Pi f-f_{\infty}||^2\leq -\frac{1}{2}||f-f_{\infty}||^2.$$
We observe that the term $\displaystyle -C||\Pi f-f_{\infty}||^2$  can be obtained from the time derivative of the macroscopic flux $\displaystyle j\colonequals\int_{\mathbb{R}^d}pf dp.$ More precisely, we have
$$\partial_t j=-\nabla_x \int_{\mathbb{R}^d}(\Pi f-f_{\infty})dp+\text{some terms},$$
see Step 2 in the proof of Theorem 2.1, noting that $\int_{\mathbb{R}^d}(\Pi f-f_{\infty})dp=\rho-\rho_{\infty}$ (with $\rho\colonequals \int_{\mathbb{R}^d} fdp$). If $\phi$ denotes the solution of  $\displaystyle -\Delta_x \phi=\int_{\mathbb{R}^d}(\Pi f-f_{\infty})dp,$ then integration by parts shows
$$\int_{\mathbb{T}^d}\nabla_x \phi \cdot \partial_t jdx= - \int_{\mathbb{T}^d}\left(\int_{\mathbb{R}^d}(\Pi f-f_{\infty})dp\right)^2 dx+\text{some terms}. $$ 
 We shall actually show that  the term $-\displaystyle \int_{\mathbb{T}^d}\left(\int_{\mathbb{R}^d}(\Pi f-f_{\infty})dp\right)^2 dx=-||\rho-\rho_{\infty}||^2_{L^2(\mathbb{T}^d)}$   is enough to  control the term  $-C||\Pi f-f_{\infty}||^2.$
 Therefore, we construct the Lypunov functional $\mathrm{E}[f|f_{\infty}]$ depending on $j$ and $\nabla_x \phi$ as
\begin{equation*}
\mathrm{E}[f|f_{\infty}]\colonequals \mathrm{H}[f|f_{\infty}]+\delta \int_{\mathbb{T}^d}\nabla_x \phi \cdot  jdx, \,\,\, \, \, \text{with some }\,\,\,\,\delta>0. 
\end{equation*}
We show that this functional is equivalent to $||f-f_{\infty}||^2$ and satisfies \eqref{dt E C}. 
This leads to our main result \eqref{lamda}.
\section{Properties of weak solutions}
In this section we derive several key properties of weak solutions that will be needed for the decay analysis in Section 4.
\subsection{Approximation of weak solutions by smooth functions}
 
We consider the function $r\in C^{\infty}_c(\mathbb{R}^d)$ given by
$$r(p)\colonequals \begin{cases}
e^{\frac{1}{|p|^2-1}},\, \, \,  &\text{if}\, \, \, \, |p|< 1\\
0, &\text{if}\,\, \, \, |p|>1.
\end{cases}$$
We define a sequence of smooth functions $r_n\in C_c^{\infty}(\mathbb{R}^d), \,  n\in \mathbb{N},$ given by
$$r_n(p)\colonequals \frac{1}{\int_{\mathbb{R}^d}r(p')dp'}n^dr(np).$$
Let $\tilde{r}_n\in C^{\infty}(\mathbb{T}^d)$ be a periodic function such that $\tilde{r}_n(x)=r_n(x)$ for $x\in [-1/2,1/2)^d.$
Let $\tau_n \in C^{\infty}_c(\mathbb{R})$  have its support in the interval $(-1/n, 0)$ and tend to the Dirac function as $n\to \infty.$ We also assume $\tau_n\geq 0$ and $ \int_{\mathbb{R}}\tau_n(t)dt=1,$ $n\in \mathbb{N}.$ Then  $\eta_n(t,x,p)\colonequals \tau_n(t)\tilde{r}_n(x)r_n(p)$ forms a sequence of smooth mollifiers (see \cite[Section 4.4]{Brezis})\footnote{Actually, $\tilde{r}_n$ is smooth only for $n\geq 2,$ but this won't be a problem in the sequel.}.

Let $f\in C([0,\infty), L^1\cap L^{2}(\mathbb{T}^d\times \mathbb{R}^d))$ be a weak solution to \eqref{Eq}.  We define
\begin{equation}\label{f_n}
f_n\colonequals \eta_n*f,
\end{equation}
where  $f(t,x, p)$ is extended to $t\in (-\infty,0)$ by making it equal to zero. 
Young's inequality (see \cite[Proposition 8.9 and 8.10]{Foll}) provides  $$f_n\in C^{\infty}([0,\infty),W^{s,k} (\mathbb{T}^d\times \mathbb{R}^d)), \, \, \forall\, s\geq 0,\,\, \forall\, k\in[1,\infty].$$
 Replacing $(t,x,p)$ by $(t',x',p')$ in \eqref{dist} and then choosing $\varphi(t',x',p')=\eta_n(t-t', x-x',p-p')$ in \eqref{dist}, we get 
\begin{multline}\label{eq.fn0}
\partial_t f_n +p\cdot \nabla_x[\eta_n*(f+\kappa f^2)]-\text{div}_p[p (\eta_n*(f+\kappa f^2))]-\Delta_p f_n\\=\text{div}_x[(p\eta_n)*(f+\kappa f^2)]-\text{div}_p[(p\eta_n)*(f+\kappa f^2)].
\end{multline}
We write this equation as
\begin{equation}\label{eq.fn}
\partial_t f_n +p\cdot \nabla_x(f_n+\kappa f_n^2)-\text{div}_p[p (f_n+\kappa f_n^2)]-\Delta_p f_n=R_n(f),
\end{equation}
where 
\begin{equation}\label{R_n}R_n(f)\colonequals \text{div}_x[(p\eta_n)*(f+\kappa f^2)]-\text{div}_p[(p\eta_n)*(f+\kappa f^2)]+\kappa p\cdot \nabla_x[f_n^2-\eta_n*f^2]+\kappa \text{div}_p[p(\eta_n * f^2-f_n^2)].
\end{equation}

\begin{lemma} Let $f\in C([0,\infty), L^1\cap  L^{2}(\mathbb{T}^d\times \mathbb{R}^d))$ be a weak solution to \eqref{Eq} and $f_n$ be its mollifications.  Then for any $T>0$
\begin{equation}\label{lim}
\lim_{n\to \infty}||\mathrm{div}_x[(p\eta_n)*(f+\kappa f^2)]-\mathrm{div}_p[(p\eta_n)*(f+\kappa f^2)]||_{L^1((0,T)\times \mathbb{T}^d\times \mathbb{R}^d)}=0
\end{equation}
\end{lemma}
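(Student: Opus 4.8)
The quantity under the limit in \eqref{lim} is a Friedrichs-type commutator between the mollification $\eta_n*(\cdot)$ and the transport operators $p\cdot\nabla_x$ and $\mathrm{div}_p(p\,\cdot)$. Indeed, writing $g:=f+\kappa f^2$ and recalling how \eqref{eq.fn0} was obtained, a direct computation (moving each derivative onto the mollifier) gives
\begin{align*}
\mathrm{div}_x[(p\eta_n)*g] &= (p\cdot\nabla_x\eta_n)*g = p\cdot\nabla_x(\eta_n*g)-\eta_n*(p\cdot\nabla_x g),\\
\mathrm{div}_p[(p\eta_n)*g] &= [\mathrm{div}_p(p\eta_n)]*g = \mathrm{div}_p[p(\eta_n*g)]-\eta_n*\mathrm{div}_p(pg),
\end{align*}
so the expression in \eqref{lim} is (minus) the commutator of $\eta_n*(\cdot)$ with $p\cdot\nabla_x-\mathrm{div}_p(p\,\cdot)$ applied to $g$. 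Since $f\in C([0,\infty),L^1\cap L^2)$ implies $g\in C([0,\infty),L^1)$, we have $g\in L^1((0,T+1)\times\mathbb{T}^d\times\mathbb{R}^d)$, and I would run the whole argument in this space. The plan is the classical two-step scheme: a uniform operator bound on $L^1$ together with vanishing on a dense class of smooth functions.

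First I would establish the uniform bound. Because in the first form above the derivatives fall on the smooth kernel $\eta_n=\tau_n(t)\tilde r_n(x)r_n(p)$, Young's inequality yields $\|\mathrm{div}_x[(p\eta_n)*g]\|_{L^1}\le\|p\cdot\nabla_x\eta_n\|_{L^1}\|g\|_{L^1}$ and similarly for the $\mathrm{div}_p$ term. Using the scalings $r_n(p)=c\,n^d r(np)$ and the definition of $\tilde r_n$, I compute $\int_{\mathbb{R}^d}|p|r_n\,dp=O(1/n)$, $\int_{\mathbb{T}^d}|\nabla_x\tilde r_n|\,dx=O(n)$, and $\int_{\mathbb{R}^d}|\mathrm{div}_p(pr_n)|\,dp=O(1)$ (a scale-invariant constant), so that both $\|p\cdot\nabla_x\eta_n\|_{L^1}$ and $\|\mathrm{div}_p(p\eta_n)\|_{L^1}$ stay bounded uniformly in $n$. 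Hence the maps $g\mapsto\mathrm{div}_x[(p\eta_n)*g]-\mathrm{div}_p[(p\eta_n)*g]$ are bounded on $L^1$ uniformly in $n$.

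Second, I would show the commutator vanishes on $C_c^\infty$. For smooth $h$ I instead move the derivatives onto $h$, writing $\mathrm{div}_x[(p\eta_n)*h]=\sum_i(p_i\eta_n)*\partial_{x_i}h$ and likewise for $\mathrm{div}_p$; since $\|p_i\eta_n\|_{L^1}=O(1/n)$, each summand is bounded by $O(1/n)\|\nabla h\|_{L^1}\to0$. Then density of $C_c^\infty$ in $L^1((0,T+1)\times\mathbb{T}^d\times\mathbb{R}^d)$ combined with the uniform bound gives \eqref{lim}: for any smooth $h$ one has $\limsup_n\|\mathrm{div}_x[(p\eta_n)*g]-\mathrm{div}_p[(p\eta_n)*g]\|_{L^1}\le M\|g-h\|_{L^1}$, and taking the infimum over $h$ yields $0$.

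The main obstacle is that no crude estimate gives the decay directly: each kernel factors into a derivative of an approximate identity, of size $O(n)$, times the vanishing first moment $\int|p|r_n\,dp=O(1/n)$, so their product is only $O(1)$. Thus Young's inequality alone yields uniform boundedness but \emph{not} convergence; the decay is a genuine cancellation encoded in the commutator structure, and it is extracted only by testing against smooth functions. A secondary point requiring care is that the coefficient $p$ is unbounded on $\mathbb{R}^d$, so I must check that the kernel $L^1$-norms are finite and uniformly bounded globally (not merely locally), which the explicit scalings above confirm.
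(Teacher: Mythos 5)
Your proof is correct, but it takes a genuinely different (though closely related) route from the paper's. The paper works directly with the kernel representation: it writes the expression as a convolution against $(\mathrm{div}_x-\mathrm{div}_p)[(p-p')\tilde{r}_n(x-x')r_n(p-p')]$, uses the fact that this kernel has zero integral in $(x',p')$ (the same divergence-structure cancellation you identify) to replace $F(\cdot,x',p')$ by $F(\cdot,x',p')-F(\cdot,x,p)$ with $F=f+\kappa f^2$, bounds the result by a uniform-in-$n$ constant times the translation modulus $\sup_{|y|\le 1/n,\,|q|\le 1/n}\|F(\cdot,\cdot+y,\cdot+q)-F\|_{L^1}$ (after a careful accounting of the one-sided time mollifier $\tau_n$), and concludes by continuity of translations in $L^1$. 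You instead prove a uniform $L^1\to L^1$ bound for the commutator operators (Young's inequality plus the scalings $O(n)\cdot O(1/n)=O(1)$, exactly the paper's constant $C$) and show the operators vanish on the dense class $C_c^\infty$ by moving the derivatives onto the test function, where $\|p\,\eta_n\|_{L^1}=O(1/n)$ does the work; a density argument finishes. These are the two classical ways to prove Friedrichs-type commutator lemmas. The paper's version is quantitative --- the decay rate is controlled by the $L^1$ translation modulus of $f+\kappa f^2$ --- whereas yours is softer but more modular, and it sidesteps the explicit splitting of $\int_0^T(\tau_n*_t g_n)\,dt$ that the paper performs. (At bottom they rest on the same fact, since $L^1$-continuity of translations is itself proved by density.) One point you should make explicit when finalizing: since $\tau_n$ is supported in $(-1/n,0)$, the output on $(0,T)$ samples $g$ only at times $t'\in(t,t+1/n)\subset(0,T+1)$, so Young's inequality indeed gives the bound $\|T_n u\|_{L^1((0,T)\times\mathbb{T}^d\times\mathbb{R}^d)}\le M\|u\|_{L^1((0,T+1)\times\mathbb{T}^d\times\mathbb{R}^d)}$ needed in your $\limsup$ estimate, and the zero extension of $f$ to $t<0$ never enters; you gesture at this by working in $L^1((0,T+1)\times\mathbb{T}^d\times\mathbb{R}^d)$, but it deserves a sentence.
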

\begin{proof}
 We denote $F\colonequals f+\kappa f^2 $ and  compute
 \begin{small}
\begin{align}\label{g_n1}
&||\text{div}_x[(p\eta_n)*F]-\text{div}_p[(p\eta_n)*F]||_{L^1((0,T)\times \mathbb{T}^d\times \mathbb{R}^d)}\nonumber\\&=\int_0^T\int_{\mathbb{T}^d}\int_{\mathbb{R}^d}\left|\tau_n *_t\int_{\mathbb{T}^d}\int_{\mathbb{R}^d}(\text{div}_x-\text{div}_p)[(p-p')\tilde{r}_n(x-x')r_n(p-p')](F(\cdot ,x',p')-F(\cdot,x,p))dp'dx'\right|dpdxdt\nonumber \\
&\leq C\int_0^T\left[\tau_n*_t\left(\sup_{|y|\leq 1/n, \,|q|\leq 1/n}\int_{\mathbb{T}^d}\int_{\mathbb{R}^d}|F(\cdot,x+y,p+q)-F(\cdot,x,p)|dpdx\right)\right](t)dt,
\end{align}\end{small}
where $C>0$ is a constant such that 
$$\int_{\mathbb{T}^d}\int_{\mathbb{R}^d}|(\text{div}_x-\text{div}_p)[(p-p')\tilde{r}_n(x-x')r_n(p-p')]|dp'dx'\leq C.$$
It is important to note that $C$ is independent of $t,x,p,n.$ Let us denote $$g_n(t)\colonequals \sup_{|y|\leq 1/n, \,|q|\leq 1/n}\int_{\mathbb{T}^d}\int_{\mathbb{R}^d}|F(t,x+y,p+q)-F(t,x,p)|dpdx,\,\,\,\,t\in [0,T+1/n].$$
Since $\tau_n\geq 0$ has its support in $(-1/n,0),$ we have
\begin{align*}
\int_0^T(\tau_n*_tg_n)(t)dt=&\int_0^T\left(\int_t^{t+1/n}\tau_n(t-s)g_n(s)ds\right)dt\\
=&\int_0^{1/n}\left(\int_0^{s}\tau_n(t-s)dt\right)g_n(s)ds+\int_{1/n}^{T}\left(\int_{s-1/n}^s\tau_n(t-s)dt\right)g_n(s)ds\\
&+\int_T^{T+1/n}\left(\int_{s-1/n}^{T}\tau_n(t-s)dt\right)g_n(s)ds.
\end{align*}
One can check that 
$$\int_0^{s}\tau_n(t-s)dt=\int_{-s}^{0}\tau_n(t)dt\leq \int_{-1/n}^{0}\tau_n(t)dt=1\,\,\,\,\, \text{for}\,\,\,s\in [0, 1/n],$$
$$\int_{s-1/n}^{s}\tau_n(t-s)dt= \int_{-1/n}^{0}\tau_n(t)dt=1\,\,\,\,\, \text{for}\,\,\,s\in [1/n,T],$$
$$\int_{s-1/n}^{T}\tau_n(t-s)dt= \int_{-1/n}^{T-s}\tau_n(t)dt\leq \int_{-1/n}^0\tau_n(t)dt=1\,\,\,\,\, \text{for}\,\,\,s\in [T, T+1/n].$$
These estimates show
 \begin{align}\label{g_n}
\int_0^T(\tau_n*_tg_n)(t)dt& \leq \int_0^{T+1/n}g_n(t)dt\nonumber \\
&=\sup_{|y| \leq 1/n, \,|q|\leq 1/n}\int_0^{T+1/n}\int_{\mathbb{T}^d}\int_{\mathbb{R}^d}|F(t,x+y,p+q)-F(t,x,p)|dpdxdt.
\end{align}
\eqref{g_n1} and \eqref{g_n}
show
\begin{align*}||\text{div}_x[(p\eta_n)*F]&-\text{div}_p[(p\eta_n)*F]||_{L^1((0,T)\times \mathbb{T}^d\times \mathbb{R}^d)}\\
\leq & C\sup_{|y| \leq 1/n, \,|q|\leq 1/n}\int_0^{T+1/n}\int_{\mathbb{T}^d}\int_{\mathbb{R}^d}|F(t,x+y,p+q)-F(t,x,p)|dpdxdt.
\end{align*}
Since translations are continuous in the $L^1((0,T+1)\times \mathbb{T}^d\times \mathbb{R}^d)$ norm (see \cite[Proposition 8.5]{Foll}), we get the claimed result.
\end{proof}
\begin{lemma}
Let $f\in C([0,\infty), L^1\cap  L^{2}(\mathbb{T}^d\times \mathbb{R}^d))$ be a weak solution to \eqref{Eq} and $f_n$ be its mollifications. Assume $f$ has regularity as in \eqref{reg}. Let $\psi \in L^{\infty}((0,\infty)\times \mathbb{T}^d\times \mathbb{R}^d)$ be such that $|p|\psi\in L^{\infty}((0,\infty)\times \mathbb{T}^d\times \mathbb{R}^d).$ Then for any $T>0$ 
\begin{equation}\label{com_p}
\lim_{n\to \infty}\int_0^T\int_{\mathbb{T}^d}
\int_{ \mathbb{R}^d} \psi (p\cdot \nabla_x[f_n^2-\eta_n*f^2]+ \mathrm{div}_p[p(\eta_n * f^2-f_n^2)])dpdxdt=0.
\end{equation} 
In particular, we have
\begin{equation}\label{com_p R}
\lim_{n\to \infty}\int_0^T\int_{\mathbb{T}^d}
\int_{ \mathbb{R}^d} \psi R_n(f)dpdxdt=0.
\end{equation} 
\end{lemma}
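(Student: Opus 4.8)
The plan is to reduce both claims to three strong $L^1$-convergences for the commutator quantity $g_n\colonequals f_n^2-\eta_n*f^2$. I would first dispatch the \emph{in particular} statement \eqref{com_p R}: inspecting $R_n(f)$ in \eqref{R_n}, its first two terms $\text{div}_x[(p\eta_n)*(f+\kappa f^2)]-\text{div}_p[(p\eta_n)*(f+\kappa f^2)]$ are exactly the expression whose $L^1((0,T)\times\mathbb{T}^d\times\mathbb{R}^d)$-norm vanishes as $n\to\infty$ by the previous Lemma, cf.\ \eqref{lim}. Since $\psi\in L^\infty$, their contribution to $\int\psi R_n(f)$ is bounded by $\|\psi\|_{L^\infty}$ times that norm and hence tends to $0$. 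The remaining two terms of $R_n(f)$ are precisely $\kappa$ times the integrand in \eqref{com_p}, so \eqref{com_p R} follows at once from \eqref{com_p}.

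For \eqref{com_p} itself I would avoid integration by parts altogether. Since $f_n=\eta_n*f$ and $\eta_n*f^2$ are smooth, the product rule gives $\text{div}_p[p(\eta_n*f^2-f_n^2)]=-d\,g_n-p\cdot\nabla_p g_n$, so the integrand equals $\psi\,(p\cdot\nabla_x g_n-d\,g_n-p\cdot\nabla_p g_n)$. Estimating each piece directly by the $L^\infty$ bounds on $\psi$ and $|p|\psi$,
$$\Bigl|\int\psi\,p\cdot\nabla_x g_n\Bigr|\le \bigl\||p|\psi\bigr\|_{L^\infty}\|\nabla_x g_n\|_{L^1},\qquad \Bigl|\int\psi\,g_n\Bigr|\le\|\psi\|_{L^\infty}\|g_n\|_{L^1},$$
and analogously for the $\nabla_p$ term with weight $|p|\psi$, it suffices to prove $g_n\to0$, $\nabla_x g_n\to0$ and $\nabla_p g_n\to0$ in $L^1((0,T)\times\mathbb{T}^d\times\mathbb{R}^d)$.

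The convergence $g_n\to0$ is elementary: by standard mollification $f_n\to f$ in $L^2((0,T)\times\mathbb{T}^d\times\mathbb{R}^d)$, whence $f_n^2\to f^2$ in $L^1$ by Cauchy--Schwarz, while $\eta_n*f^2\to f^2$ in $L^1$ because $f^2\in L^1$. The gradient convergences are the heart of the matter. As mollification commutes with differentiation and $\nabla(f^2)=2f\nabla f$ holds (the chain rule being licit since $f,\nabla f\in L^2$ by \eqref{reg}), one obtains the commutator identity $\nabla_x g_n=2\bigl[f_n(\eta_n*\nabla_x f)-\eta_n*(f\nabla_x f)\bigr]$ and its analogue in $p$. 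Decomposing
$$f_n(\eta_n*\nabla_x f)-\eta_n*(f\nabla_x f)=(f_n-f)(\eta_n*\nabla_x f)+f(\eta_n*\nabla_x f-\nabla_x f)+\bigl(f\nabla_x f-\eta_n*(f\nabla_x f)\bigr),$$
each summand tends to $0$ in $L^1$: the first since $\|f_n-f\|_{L^2}\to0$ while $\|\eta_n*\nabla_x f\|_{L^2}$ stays bounded, the second since $\|\eta_n*\nabla_x f-\nabla_x f\|_{L^2}\to0$, and the third since $f\nabla_x f\in L^1$. Hence $\nabla_x g_n\to0$ in $L^1$, and identically $\nabla_p g_n\to0$.

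The main obstacle is this gradient commutator step. It hinges on genuinely having the weak derivatives $\nabla_x f,\nabla_p f\in L^2$ from \eqref{reg}, which is exactly what makes both the chain rule $\nabla(f^2)=2f\nabla f$ legitimate and the product $f\nabla f$ integrable; without such regularity one would be forced into a true DiPerna--Lions commutator estimate. The factor $|p|$ carried by the transport and $p$-divergence terms, which would otherwise destroy integrability over the unbounded momentum variable, is precisely absorbed by the hypothesis $|p|\psi\in L^\infty$. A minor point requiring care is that all convergences must hold on the slab $(0,T)\times\mathbb{T}^d\times\mathbb{R}^d$ after extending $f$ by zero for $t<0$; because $\tau_n$ is supported in $(-1/n,0)$, the mollified quantities at $t\in(0,T)$ only sample $f$ at times in $(0,T+1/n)$, so no spurious boundary term is created at $t=0$.
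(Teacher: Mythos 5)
Your proof is correct and follows essentially the same route as the paper: expand the $\mathrm{div}_p$ term by the product rule, reduce everything to $L^1$-convergence of the commutator $f_n^2-\eta_n*f^2$ and of its $x$- and $p$-gradients via a three-term telescoping decomposition (yours swaps which factor carries the mollification, but the estimates are identical), and deduce \eqref{com_p R} from \eqref{lim} together with \eqref{com_p}. Your use of \eqref{reg} to justify the chain rule and the H\"older/Young steps, and of $|p|\psi\in L^{\infty}$ to absorb the momentum weight, matches the paper's argument.
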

\begin{proof}
We consider the first term of \eqref{com_p}
$$ \psi p\cdot\nabla_x(f_n^2-\eta_n*f^2)=2 \psi f_n  p\cdot(\nabla_xf_n-\nabla_xf)+2 \psi p\cdot \nabla_xf (f_n-f)+ \psi p\cdot(\nabla_xf^2 -\nabla_x\eta_n*f^2).$$
Because of the regularity assumption \eqref{reg}, we have 
$f_n(\nabla_xf_n-\nabla_xf)\to 0$ and $\nabla_xf^2 -\nabla_x\eta_n*f^2\to 0$ in $ L^1_{loc}([0,\infty), L^1(\mathbb{T}^d\times \mathbb{R}^d))$ as $n\to \infty.$ 
As we have $\nabla_x f\in L^2_{loc}([0,\infty),L^2(\mathbb{T}^d\times \mathbb{R}^d))$ and $f_n-f\to 0$ in $ L^2_{loc}([0,\infty), L^2(\mathbb{T}^d\times \mathbb{R}^d))$ as $n\to \infty,$   we can  show by the H\"older inequality that  $\nabla_xf (f_n-f)\to 0$ in $ L^1_{loc}([0,\infty), L^1(\mathbb{T}^d\times \mathbb{R}^d))$ as $n\to \infty.$ Using these limits and the fact that $\psi$ and  $|p|\psi$ are bounded, we obtain
 \begin{equation*}\lim_{n\to \infty}\int^T_0\int_{\mathbb{T}^d} \int_{\mathbb{R}^d}\psi p\cdot \nabla_x [f_n^2-\eta_n*f^2]dpdxdt=0.
 \end{equation*}
 The second term of \eqref{com_p} can be written as 
  $$\psi \mathrm{div}_p[p(\eta_n*f^2-f^2_n)]=d \psi (\eta_n*f^2-f^2_n)+\psi p\cdot \nabla_p(\eta_n*f^2-f^2_n),$$
  and similar arguments show 
  \begin{equation*}\lim_{n\to \infty}\int^T_0\int_{\mathbb{T}^d} \int_{\mathbb{R}^d}\psi \mathrm{div}_p[p(\eta_n*f^2-f^2_n)] dpdxdt=0.
  \end{equation*}
  The proof of \eqref{com_p R} follows from \eqref{lim} and \eqref{com_p}.
\end{proof}
 \subsection{Mass conservation}
To verify the mass conservation of weak solutions we introduce a cut-off in the $p-$direction. Let $\zeta:[0,\infty)\to \mathbb{R}$ be a non-increasing function in $ C^{\infty}((0,\infty))$ such that $\zeta(s)=1$ for $s\in [0,1]$ and $\zeta(s)=0$ for $s\geq 2.$  We define
 \begin{equation}\label{zeta}
 \zeta_{\epsilon}(p)\colonequals \zeta \left(\epsilon |p|\right) ,\, \, p\in \mathbb{R}^d, \, \epsilon>0.
 \end{equation}
 We note that $\zeta_{\epsilon}(p)=1$ for $|p|\leq 1/\epsilon$ and $\zeta_{\epsilon}(p)=0$ for $|p|\geq 2/\epsilon.$ There is a constant $C>0$ independent of $\epsilon$ such that 
 \begin{equation}\label{gr.z}
 |\nabla_{p}\zeta_{\epsilon}(p)|\leq C\epsilon, \, \, \, |\Delta_p \zeta_{\epsilon}(p)|\leq C\epsilon^2, \, \, \, |\nabla_{p}\zeta_{\epsilon}(p)\cdot p|\leq C, \, \, \, |p|^2|\Delta_p \zeta_{\epsilon}(p)|\leq C
 \end{equation}
  for all  $p\in \mathbb{R}^d.$
\begin{lemma}
Let $f\in C([0,\infty), L^1\cap  L^{2}(\mathbb{T}^d\times \mathbb{R}^d))$ be a weak solution to \eqref{Eq} with an initial condition $f_0.$   Then we have mass conservation
\begin{equation*}
\int_{\mathbb{T}^d}\int_{\mathbb{R}^d}f(t,x,p)dpdx=\int_{\mathbb{T}^d}\int_{\mathbb{R}^d}f_0(x,p)dpdx, \, \, \, \, \forall\, t\geq 0.
\end{equation*}
\end{lemma}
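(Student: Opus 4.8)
The plan is to test the weak formulation \eqref{dist} against an approximation of the constant function $1$. Since $1$ is not admissible (it is neither compactly supported in $p$ nor does it isolate the time variable), I would take $\varphi(t,x,p)\colonequals \chi(t)\zeta_{\epsilon}(p)$, where $\zeta_{\epsilon}$ is the momentum cut-off from \eqref{zeta} and $\chi\in C^{\infty}_c([0,\infty))$ is an arbitrary time test function, allowed to be nonzero at $t=0$. This $\varphi$ lies in $C^{\infty}_c([0,\infty)\times \mathbb{T}^d\times \mathbb{R}^d)$, hence is admissible, and the two small objects play distinct roles: $\zeta_\epsilon$ tames the unboundedness in $p$, while $\chi$ will eventually expose the time derivative of the mass.

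Because $\varphi$ does not depend on $x$, the transport term $(1+\kappa f)p\cdot \nabla_x\varphi$ drops out, and \eqref{dist} reduces to
\begin{equation*}
\int_0^{\infty}\int_{\mathbb{T}^d}\int_{\mathbb{R}^d}f\Big[\chi'(t)\zeta_{\epsilon}-\chi(t)(1+\kappa f)p\cdot\nabla_p\zeta_{\epsilon}+\chi(t)\Delta_p\zeta_{\epsilon}\Big]dpdxdt+\chi(0)\int_{\mathbb{T}^d}\int_{\mathbb{R}^d}\zeta_{\epsilon}f_0\,dpdx=0.
\end{equation*}
The next step is to let $\epsilon\to 0$. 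The last two bracket terms are supported in the annulus $1/\epsilon\leq |p|\leq 2/\epsilon$; by the bounds \eqref{gr.z}, i.e.\ $|p\cdot\nabla_p\zeta_{\epsilon}|\leq C$ and $|\Delta_p\zeta_{\epsilon}|\leq C\epsilon^2$, their integrands are dominated by $C|f+\kappa f^2|$ and $C\epsilon^2|f|$, respectively. Since $f\in C([0,\infty),L^1\cap L^2)$, one has $f+\kappa f^2\in L^{\infty}_{loc}([0,\infty),L^1)$ (the factor $f^2$ being integrable precisely because $f\in L^2$), so dominated convergence forces both terms to vanish; likewise $\zeta_{\epsilon}\to 1$ pointwise and boundedly. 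Writing $m(t)\colonequals \int_{\mathbb{T}^d}\int_{\mathbb{R}^d}f(t,x,p)\,dpdx$, the limit reads
\begin{equation*}
\int_0^{\infty}m(t)\chi'(t)\,dt+\chi(0)\int_{\mathbb{T}^d}\int_{\mathbb{R}^d}f_0\,dpdx=0,\qquad \forall\,\chi\in C^{\infty}_c([0,\infty)).
\end{equation*}

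Finally I would extract mass conservation from this identity. Restricting to $\chi\in C^{\infty}_c((0,\infty))$ (so $\chi(0)=0$) shows that the distributional derivative of $m$ vanishes on $(0,\infty)$; since $m$ is continuous (indeed $|m(t)-m(s)|\leq \|f(t)-f(s)\|_{L^1}$ and $f\in C([0,\infty),L^1)$), $m$ is constant, say $m\equiv c$, on $[0,\infty)$. Substituting $m\equiv c$ back and using $\int_0^\infty \chi'(t)\,dt=-\chi(0)$ for compactly supported $\chi$ gives $c\,\chi(0)=\chi(0)\int_{\mathbb{T}^d}\int_{\mathbb{R}^d}f_0\,dpdx$; choosing $\chi$ with $\chi(0)\neq 0$ yields $c=\int_{\mathbb{T}^d}\int_{\mathbb{R}^d}f_0\,dpdx$, which is exactly the assertion.

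The only genuinely delicate point is the passage $\epsilon\to 0$: one must ensure that the momentum-transport contribution $f(1+\kappa f)\,p\cdot\nabla_p\zeta_\epsilon$ — which carries both the extra factor $p$ and the nonlinearity — stays controlled as the support recedes to infinity. This is exactly why the $\epsilon$-uniform bound $|p\cdot\nabla_p\zeta_\epsilon|\leq C$ in \eqref{gr.z} is needed, and why the regularity $f\in L^2$ enters, guaranteeing $f^2\in L^1$ so that the quadratic term is dominated uniformly and its annular tail vanishes.
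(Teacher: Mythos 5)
Your proof is correct, but it takes a genuinely different route from the paper's. The paper never tests the weak formulation \eqref{dist} with an explicit test function; instead it runs the argument through the mollification machinery of Section 3.1: it multiplies the regularized equation \eqref{eq.fn0} by $\zeta_{\epsilon}(p)$, integrates by parts and in time, and then passes to a double limit, first $n\to\infty$ (which requires the auxiliary lemma on the remainder terms, i.e.\ \eqref{lim}) and then $\epsilon\to 0$ by dominated convergence. Your argument short-circuits all of this: choosing the admissible test function $\varphi(t,x,p)=\chi(t)\zeta_{\epsilon}(p)$ in \eqref{dist} directly, you need only the single limit $\epsilon\to 0$, the uniform bounds \eqref{gr.z}, the integrability of $f+\kappa f^2$ coming from $f\in C([0,\infty),L^1\cap L^2)$, and the elementary distributional fact that a continuous function with vanishing distributional derivative is constant; no mollification, no remainder $R_n$, and no analogue of \eqref{lim} are needed. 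What each approach buys: yours is shorter and more self-contained, and it also handles the initial datum cleanly, since the term $\chi(0)\int_{\mathbb{T}^d}\int_{\mathbb{R}^d}\zeta_{\epsilon}f_0\,dpdx$ in \eqref{dist} delivers $\int\int f_0$ directly without invoking $f_n(0)\to f(0)$. It works, however, precisely because mass is a \emph{linear} functional of $f$, so the multiplier does not involve $f$ itself. The paper's heavier setup is built once and then reused for the genuinely nonlinear computations (the $L^1$-contraction of Lemma 3.4 and the entropy dissipation estimate), where one must multiply the equation by nonlinear functions of the solution, such as $\mathrm{sign}_{\varepsilon}(f_n-g_n)$ or logarithms of $f_n$, and a direct choice of test function in \eqref{dist} is impossible; for those statements your strategy would not extend, which explains the paper's uniform use of mollification even for this simple lemma.
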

\begin{proof}
We multiply \eqref{eq.fn0} by $\zeta_{\epsilon}(p)$ and integrate by parts
\begin{align*}
\frac{d}{dt}\int_{\mathbb{T}^d}\int_{\mathbb{R}^d}\zeta_{\epsilon}f_ndpdx=&\int_{\mathbb{T}^d}\int_{\mathbb{R}^d} (\Delta_p \zeta_{\epsilon} f_n-\nabla_p \zeta_{\epsilon}\cdot p\eta_n*(f+\kappa f^2))dpdx\\
&+\int_{\mathbb{T}^d}\int_{\mathbb{R}^d}\zeta_{\epsilon}(\mathrm{div}_x[(p\eta_n)*(f+\kappa f^2)]-\mathrm{div}_p[(p\eta_n)*(f+\kappa f^2)])dpdx.
\end{align*}
We integrate this equation on the interval $[0,T]$ for any $T>0:$
\begin{align*}
\int_{\mathbb{T}^d}\int_{\mathbb{R}^d}\zeta_{\epsilon}f_n(T)dpdx&-\int_{\mathbb{T}^d}\int_{\mathbb{R}^d}\zeta_{\epsilon}f_n(0)dpdx\\=&\int_0^T\int_{\mathbb{T}^d}\int_{\mathbb{R}^d} (\Delta_p \zeta_{\epsilon} f_n-\nabla_p \zeta_{\epsilon}\cdot p\eta_n*(f+\kappa f^2))dpdxdt\\
&+\int_0^T\int_{\mathbb{T}^d}\int_{\mathbb{R}^d}\zeta_{\epsilon}(\mathrm{div}_x[(p\eta_n)*(f+\kappa f^2)]-\mathrm{div}_p[(p\eta_n)*(f+\kappa f^2)])dpdxdt.
\end{align*}
 We keep here the $\mathrm{div}_x-$term, as we shall use \eqref{lim} later on. In this equation we first let $n\to \infty$ and then $\epsilon\to 0:$ the fact that  $\zeta_{\epsilon}$ is bounded  and  the Lebesgue dominated convergence theorem  show
 $$\lim_{\epsilon\to 0}\lim_{n\to \infty}\big(\int_{\mathbb{T}^d}\int_{\mathbb{R}^d}\zeta_{\epsilon}f_n(T)dpdx-\int_{\mathbb{T}^d}\int_{\mathbb{R}^d}\zeta_{\epsilon}f_n(0)dpdx\big)=\int_{\mathbb{T}^d}\int_{\mathbb{R}^d}f(T)dpdx-\int_{\mathbb{T}^d}\int_{\mathbb{R}^d}f_0dpdx.$$ 
 Since $\Delta_p \zeta_{\epsilon}$ and $\nabla_p \zeta_{\epsilon}\cdot p$ are uniformly  bounded in $\epsilon$, we have 
\begin{multline*}\lim_{n\to \infty} \int_0^T\int_{\mathbb{T}^d}\int_{\mathbb{R}^d} (\Delta_p \zeta_{\epsilon} f_n-\nabla_p \zeta_{\epsilon}\cdot p\eta_n*(f+\kappa f^2))dpdxdt\\=\int_0^T\int_{\mathbb{T}^d}\int_{\mathbb{R}^d} (\Delta_p \zeta_{\epsilon} f-\nabla_p \zeta_{\epsilon}\cdot p(f+\kappa f^2))dpdxdt.
\end{multline*}
$\Delta_p \zeta_{\epsilon} f-\nabla_p \zeta_{\epsilon}\cdot p(f+\kappa f^2)\to 0$ pointwise as $\epsilon \to 0.$ Moreover, $\Delta_p \zeta_{\epsilon} $ and $\nabla_p \zeta_{\epsilon}\cdot p$ are bounded. Hence, we can apply the Lebesgue dominated convergence theorem 
 $$\lim_{\epsilon\to 0}\int_0^T\int_{\mathbb{T}^d}\int_{\mathbb{R}^d} (\Delta_p \zeta_{\epsilon} f-\nabla_p \zeta_{\epsilon}\cdot p(f+\kappa f^2))dpdxdt=0.$$
The fact that $\zeta_{\epsilon}$ is bounded and \eqref{lim} imply
$$
\lim_{\epsilon\to 0}\lim_{n\to \infty} \int_0^T\int_{\mathbb{T}^d}\int_{\mathbb{R}^d}\zeta_{\epsilon}(\mathrm{div}_x[(p\eta_n)*(f+\kappa f^2)]-\mathrm{div}_p[(p\eta_n)*(f+\kappa f^2)])dpdxdt=0.
$$
 These limits show
 $$\int_{\mathbb{T}^d}\int_{\mathbb{R}^d}f(T)dpdx=\int_{\mathbb{T}^d}\int_{\mathbb{R}^d}f_0dpdx$$
for all $T>0.$
\end{proof}

\subsection{Contraction of the $L^1 $ norm and maximum principle}
\begin{lemma}\label{max.pri}
Let $f,\, g\in C([0,\infty), L^1\cap  L^{2}(\mathbb{T}^d\times \mathbb{R}^d))$  be two weak solutions of \eqref{Eq} with  initial conditions $f_0$ and $g_0,$ respectively. Assume $f$ and $g$ have regularity as in \eqref{reg}.
 Then
\begin{equation*}
||f(t)-g(t)||_{L^1(\mathbb{R}^{2d})}\leq ||f_0-g_0||_{L^1(\mathbb{R}^{2d})}
\end{equation*} 
for all $t\geq 0.$ 
\end{lemma}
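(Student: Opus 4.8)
The plan is to prove an $L^1$-contraction by the standard regularized-sign-function technique, adapted to the nonlinear transport and drift through the mollifications from Section 3.1. First I would reduce to the difference $w := f - g$. Since $f^2 - g^2 = (f+g)w$, setting $a := 1 + \kappa(f+g)$ turns \eqref{Eq} into the divergence form $\partial_t w + \mathrm{div}_x(p\,a w) = \mathrm{div}_p(\nabla_p w + p\,a w)$. Because $f,g$ are only weak solutions, I would work with $w_n := f_n - g_n$ instead: subtracting the two copies of \eqref{eq.fn} and using $f_n^2 - g_n^2 = (f_n+g_n)w_n$, the mollification $w_n$ solves the same divergence-form equation with coefficient $a_n := 1 + \kappa(f_n+g_n)$ and right-hand side $R_n(f) - R_n(g)$.

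Next I would fix a smooth nondecreasing odd regularization $S_\delta$ of $\mathrm{sgn}$ with $S_\delta(0)=0$, its primitive $\Sigma_\delta$ (so $\Sigma_\delta \to |\cdot|$), and the auxiliary function $\Theta_\delta$ defined by $\Theta_\delta'(s) = s\,S_\delta'(s)$, $\Theta_\delta(0)=0$; the two facts I need are $|\Theta_\delta| \le |\cdot|$ and $\Theta_\delta \to 0$ pointwise as $\delta \to 0$. I would then multiply the $w_n$-equation by $\zeta_\epsilon(p)\,S_\delta(w_n)$, with the $p$-cutoff $\zeta_\epsilon$ from \eqref{zeta}, and integrate over $[0,T]\times\mathbb{T}^d\times\mathbb{R}^d$. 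The time term is $\frac{d}{dt}\int \zeta_\epsilon \Sigma_\delta(w_n)$. Integrating the transport term by parts in $x$ yields $\int \zeta_\epsilon \Theta_\delta(w_n)\,p\cdot\nabla_x a_n$, using $w_n S_\delta'(w_n)\nabla_x w_n = \nabla_x\Theta_\delta(w_n)$. Integrating the $p$-operator by parts in $p$ produces the dissipation $-\int \zeta_\epsilon S_\delta'(w_n)|\nabla_p w_n|^2 \le 0$ (which I discard), a drift contribution $\int \zeta_\epsilon \Theta_\delta(w_n)\,\mathrm{div}_p(a_n p)$, and cutoff-error terms carrying $\nabla_p\zeta_\epsilon$. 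The last term is $\int \zeta_\epsilon S_\delta(w_n)(R_n(f)-R_n(g))$.

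I would then take the limits in the order $n\to\infty$, $\delta\to 0$, $\epsilon\to 0$. Letting $n\to\infty$ first, the remainder vanishes by \eqref{com_p R} applied to $f$ and to $g$ with the weight $\psi = \zeta_\epsilon S_\delta(w_n)$, which is admissible since it is bounded and $|p|\psi$ is bounded by $2/\epsilon$ (as $\zeta_\epsilon$ is compactly supported), the bound being uniform in $n$; the regularity \eqref{reg} ensures $w_n,\nabla_p w_n, a_n, \nabla_x a_n$ converge in the norms needed for the remaining terms. Letting $\delta\to 0$, the two $\Theta_\delta$-terms vanish by dominated convergence: on the set $\{|p|\le 2/\epsilon\}$ — of finite measure and on which $|p|$ is bounded — the integrands are controlled by $|w|\,(|a| + |\nabla_x a| + |\nabla_p a|)$, integrable in view of \eqref{reg} and $w\in C([0,\infty),L^2)$, while $\Theta_\delta(w)\to 0$ with $|\Theta_\delta(w)|\le |w|$; simultaneously $\Sigma_\delta(w)\to|w|$. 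Finally letting $\epsilon\to 0$, the cutoff errors vanish using \eqref{gr.z} (namely $|\nabla_p\zeta_\epsilon|\le C\epsilon$ and $|p\cdot\nabla_p\zeta_\epsilon|\le C$ on $\{1/\epsilon\le|p|\le 2/\epsilon\}$) together with the tail-smallness of the $L^1$ functions $\nabla_p w$ and $a w$. Collecting these limits gives $\|w(T)\|_{L^1}\le \|w(0)\|_{L^1}$, which is the claim.

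The main obstacle I anticipate is organizational rather than conceptual: the nonlinearity forces the transport and drift coefficients to depend on the solutions, and the argument only closes because multiplying by $S_\delta(w_n)$ converts these coefficient terms into $\Theta_\delta$-contributions that are annihilated in the $\delta\to 0$ limit. Making this rigorous requires threading the triple limit carefully — in particular, one must keep the $p$-cutoff $\zeta_\epsilon$ in place while sending $\delta\to 0$, so that the $|p|$-weighted coefficient integrals (and the constant part of $a$, which is not square-integrable on all of $\mathbb{R}^d$) stay finite, and remove it only at the very end; and one must justify that the $n$-dependent weight $\psi=\zeta_\epsilon S_\delta(w_n)$ is legitimate in \eqref{com_p R}, which hinges on the uniform-in-$n$ bounds on $\psi$ and $|p|\psi$.
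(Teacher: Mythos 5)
Your proposal is correct and follows essentially the same route as the paper's proof: mollify via \eqref{eq.fn}, test the equation for $f_n-g_n$ with $\zeta_{\epsilon}(p)$ times a regularized sign function, discard the nonnegative dissipation term, convert the nonlinear coefficient terms into terms of the form $(f_n-g_n)\,\mathrm{sign}_{\varepsilon}(f_n-g_n)-|f_n-g_n|_{\varepsilon}$ (your $\Theta_\delta(w_n)$, which is the same object) that vanish when the sign regularization is removed, kill the mollification remainder via \eqref{com_p R} with an $n$-dependent but uniformly bounded weight, and conclude with \eqref{gr.z} and dominated convergence as $\epsilon\to 0$. The only deviation is the order of limits — the paper removes the sign regularization first (at fixed $n$, where all functions are smooth) and then lets $n\to\infty$, whereas you send $n\to\infty$ first and rely on the Lipschitz continuity of $S_\delta$ and $\Theta_\delta$ together with \eqref{reg} to pass to the limit in the coefficient terms — and both orders close, so this is a cosmetic variation rather than a different method.
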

\begin{proof}
For all $n\in \mathbb{N},$ we define
$$\text{sign}_{\varepsilon}(s)\colonequals \begin{cases} -1,\, \, \,\, \, \,  &\text{if}\,\, \,s\leq -\varepsilon\\
\eta_{\varepsilon}(s),\, \, \, &\text{if}\, \, \, -\varepsilon\leq s\leq  \varepsilon\\
1, \, \, \, \, \, \, \,\, \,  \, \, &\text{if}\, \, \, s\geq \varepsilon
\end{cases} $$
with an increasing odd function $\eta_{\varepsilon}\in C^{\infty}([-\varepsilon,\varepsilon])$ such that $\text{sign}_{\varepsilon}$ is $ C^{\infty}$ at $s=\pm \varepsilon.$   Let $|\cdot|_{\varepsilon}$ denote the primitive of $\text{sign}_{\varepsilon}$ which vanishes at 0. One can  check that  $\text{sign}_{\varepsilon}$ and $|\cdot|_{\varepsilon}$ converge pointwise  to the usual sign  and absolute value functions as $\varepsilon\to 0,$ respectively.

 Let $f_n$ and $g_n$ be defined as in \eqref{f_n}. Then, by \eqref{eq.fn} we have  
\begin{equation*}
\partial_t(f_n-g_n)+p\cdot \nabla_x(f_n-g_n+\kappa (f^2_n-g^2_n))-\Delta_p(f_n-g_n)-\text{div}_p(p(f_n-g_n) +p\kappa (f^2_n-g^2_n))=R_n(f)-R_n(g). 
\end{equation*}
 Let $\zeta_{\epsilon}$ be the function defined in \eqref{zeta}. We multiply this equation by $\zeta_{\epsilon}(p) \text{sign}_{\varepsilon}(f_n-g_n)$ and integrate by parts
 \begin{align*}
\partial_t\int_{\mathbb{T}^d}& \int_{\mathbb{R}^d}\zeta_{\epsilon}(p)|f_n-g_n|_{\varepsilon} dpdx\\=&-\int_{\mathbb{T}^d} \int_{\mathbb{R}^d}\zeta_{\epsilon}(p)  \text{sign}_{\varepsilon}'(f_n-g_n)|\nabla_p(f_n-g_n)|^2dpdx\\&-\int_{\mathbb{T}^d} \int_{\mathbb{R}^d}\text{sign}_{\varepsilon}(f_n-g_n) \nabla_p\zeta_{\epsilon}(p)\cdot (\nabla_p f_n+p(f_n+\kappa f^2_n)- \nabla_p g_n-p(g_n+\kappa g^2_n))dpdx\\
&-\int_{\mathbb{T}^d} \int_{\mathbb{R}^d}\zeta_{\epsilon}(p) \text{sign}_{\varepsilon}'(f_n-g_n)p\cdot\nabla_p(f_n-g_n)(f_n-g_n)(1+\kappa f_n+\kappa g_n)dpdx\\
&+\int_{\mathbb{T}^d} \int_{\mathbb{R}^d}\zeta_{\epsilon}(p) \text{sign}_{\varepsilon}'(f_n-g_n)p\cdot\nabla_x(f_n-g_n)(f_n-g_n)(1+\kappa f_n+\kappa g_n)dpdx\\
&+\int_{\mathbb{T}^d} \int_{\mathbb{R}^d}\zeta_{\epsilon}(p) \text{sign}_{\varepsilon}(f_n-g_n)(R_n(f)-R_n(g))dpdx.
 \end{align*}
 Note that this formula includes both $\varepsilon$ and $\epsilon.$ We integrate this equation on the interval $[0,T],\, T>0,$ and  use
 $\zeta_{\epsilon}(p) \text{sign}_{\varepsilon}'(f_n-g_n)|\nabla_p(f_n-g_n)|^2\geq 0$ to estimate
  \begin{align*}
\int_{\mathbb{T}^d}  &\int_{\mathbb{R}^d}\zeta_{\epsilon}(p)|f_n(T)-g_n(T)|_{\varepsilon} dpdx-\int_{\mathbb{T}^d}  \int_{\mathbb{R}^d}\zeta_{\epsilon}(p)|f_n(0)-g_n(0)|_{\varepsilon} dpdx\\ \leq &-\int_0^T\int_{\mathbb{T}^d} \int_{\mathbb{R}^d}\text{sign}_{\varepsilon}(f_n-g_n) \nabla_p\zeta_{\epsilon}(p)\cdot (\nabla_p f_n+p(f_n+\kappa f^2_n)- \nabla_p g_n-p(g_n+\kappa g^2_n))dpdxdt\\
&-\int_0^T\int_{\mathbb{T}^d} \int_{\mathbb{R}^d}\zeta_{\epsilon}(p) \text{sign}_{\varepsilon}'(f_n-g_n)p\cdot\nabla_p(f_n-g_n)(f_n-g_n)(1+\kappa f_n+\kappa g_n)dpdxdt\\
&+\int_0^T\int_{\mathbb{T}^d} \int_{\mathbb{R}^d}\zeta_{\epsilon}(p) \text{sign}_{\varepsilon}'(f_n-g_n)p\cdot\nabla_x(f_n-g_n)(f_n-g_n)(1+\kappa f_n+\kappa g_n)dpdxdt\\
& +\int_0^T\int_{\mathbb{T}^d} \int_{\mathbb{R}^d}\zeta_{\epsilon}(p) \text{sign}_{\varepsilon}(f_n-g_n)(R_n(f)-R_n(g))dpdxdt.
 \end{align*}
 Using the identities
 $$\text{sign}_{\varepsilon}'(f_n-g_n)\nabla_x(f_n-g_n)(f_n-g_n)=\nabla_x((f_n-g_n)\text{sign}_{\varepsilon}(f_n-g_n)-|f_n-g_n|_{\varepsilon}),$$
 $$\text{sign}_{\varepsilon}'(f_n-g_n)\nabla_p(f_n-g_n)(f_n-g_n)=\nabla_p((f_n-g_n)\text{sign}_{\varepsilon}(f_n-g_n)-|f_n-g_n|_{\varepsilon}),$$
 we integrate by parts
 \begin{align*}
\int_{\mathbb{T}^d}  &\int_{\mathbb{R}^d}\zeta_{\epsilon}(p)|f_n(T)-g_n(T)|_{\varepsilon} dpdx-\int_{\mathbb{T}^d}  \int_{\mathbb{R}^d}\zeta_{\epsilon}(p)|f_n(0)-g_n(0)|_{\varepsilon} dpdx\\ \leq &-\int_0^T\int_{\mathbb{T}^d} \int_{\mathbb{R}^d}\text{sign}_{\varepsilon}(f_n-g_n) \nabla_p\zeta_{\epsilon}(p)\cdot (\nabla_p f_n+p(f_n+\kappa f^2_n)- \nabla_p g_n-p(g_n+\kappa g^2_n))dpdxdt\\
&+\int_0^T\int_{\mathbb{T}^d} \int_{\mathbb{R}^d}\text{div}_p[\zeta_{\epsilon}(p)(1+\kappa f_n+\kappa g_n)p] ((f_n-g_n)\text{sign}_{\varepsilon}(f_n-g_n)-|f_n-g_n|_{\varepsilon})dpdxdt\\
&-\int_0^T\int_{\mathbb{T}^d} \int_{\mathbb{R}^d}\kappa\zeta_{\epsilon}(p) p\cdot \nabla_x(f_n+ g_n)((f_n-g_n)\text{sign}_{\varepsilon}(f_n-g_n)-|f_n-g_n|_{\varepsilon}) dpdxdt\\
&+\int_0^T\int_{\mathbb{T}^d} \int_{\mathbb{R}^d}\zeta_{\epsilon}(p) \text{sign}_{\varepsilon}(f_n-g_n)(R_n(f)-R_n(g))dpdxdt.
 \end{align*}
 Since $|f_n-g_n|_{\varepsilon}\to |f_n-g_n|,$ $\text{sign}_{\varepsilon}(f_n-g_n)\to \text{sign}(f_n-g_n),$ and $(f_n-g_n)\text{sign}_{\varepsilon}(f_n-g_n)-|f_n-g_n|_{\varepsilon}\to 0$ pointwise  as $\varepsilon\to 0,$ we conclude that 
\begin{align*}
\int_{\mathbb{T}^d}& \int_{\mathbb{R}^d}\zeta_{\epsilon}(p)|f_n(T)-g_n(T)| dpdx-\int_{\mathbb{T}^d} \int_{\mathbb{R}^d}\zeta_{\epsilon}(p)|f_n(0)-g_n(0)| dpdx\nonumber \\ \leq &-\int_0^T\int_{\mathbb{T}^d} \int_{\mathbb{R}^d}\text{sign}(f_n-g_n) \nabla_p\zeta_{\epsilon}(p)\cdot (\nabla_p f_n+p(f_n+\kappa f^2_n)- \nabla_p g_n-p(g_n+\kappa g^2_n))dpdxdt \nonumber \\
&+\int^T_0\int_{\mathbb{T}^d} \int_{\mathbb{R}^d}\zeta_{\epsilon}(p) \text{sign}(f_n-g_n)(R_n(f)-R_n(g))dpdxdt.
 \end{align*}
  Using the regularity assumptions \eqref{reg} and the fact that $\nabla_{p}\zeta_{\epsilon}(p)\cdot p$ and $\text{sign}(f_n-g_n)$ are bounded for a fixed $\epsilon,$ we can pass to the limit as  $n\to \infty$
\begin{align}\label{aXl}
\int_{\mathbb{T}^d}& \int_{\mathbb{R}^d}\zeta_{\epsilon}(p)|f(T)-g(T)| dpdx-\int_{\mathbb{T}^d} \int_{\mathbb{R}^d}\zeta_{\epsilon}(p)|f_0-g_0| dpdx\nonumber \\ \leq &\int_0^T\int_{\mathbb{T}^d} \int_{\mathbb{R}^d}\big| \nabla_p\zeta_{\epsilon}(p)\cdot (\nabla_p f+p(f+\kappa f^2)- \nabla_p g-p(g+\kappa g^2))\big|dpdxdt \nonumber \\
&+\lim_{n\to \infty}\int^T_0\int_{\mathbb{T}^d} \int_{\mathbb{R}^d}\zeta_{\epsilon}(p) \text{sign}(f_n-g_n)(R_n(f)-R_n(g))dpdxdt.
 \end{align}
  Since $|\text{sign}(f_n-g_n)|\leq 1,$ one can check  that \eqref{com_p R} with $\psi\colonequals \zeta_{\epsilon}(p) \text{sign}(f_n-g_n)$ still holds, even if here $\psi $ depends on $n.$ This provides
   \begin{equation*}\lim_{n\to \infty}\int^T_0\int_{\mathbb{T}^d} \int_{\mathbb{R}^d}\zeta_{\epsilon}(p) \text{sign}(f_n-g_n)(R_n(f)-R_n(g))dpdx=0.
 \end{equation*}
 Then we  let $\epsilon\to 0$ in \eqref{aXl}: 
 \begin{align*}
\int_{\mathbb{T}^d}& \int_{\mathbb{R}^d}|f(T)-g(T)| dpdx-\int_{\mathbb{T}^d} \int_{\mathbb{R}^d}|f_0-g_0| dpdx\nonumber \\ \leq &\lim_{\epsilon\to 0}\int_0^T\int_{\mathbb{T}^d} \int_{\mathbb{R}^d}\big| \nabla_p\zeta_{\epsilon}(p)\cdot (\nabla_p f+p(f+\kappa f^2)- \nabla_p g-p(g+\kappa g^2))\big|dpdxdt.
 \end{align*}
 It is easy to check that $ \nabla_p\zeta_{\epsilon}(p)\cdot (\nabla_p f+p(f+\kappa f^2)- \nabla_p g-p(g+\kappa g^2))$ converges pointwise to zero  because of \eqref{gr.z}. Moreover,  $|\nabla_p \zeta_{\epsilon}(p)|$ and $\nabla_p \zeta_{\epsilon}(p)\cdot p$ (terms which depend on $\epsilon$) are bounded.  The regularity assumptions \eqref{reg} imply  $|\nabla_p f|$, $|\nabla_p g|,$ $ f+\kappa f^2,$ and $ g+\kappa g^2$ are in $ L^1_{loc}([0,\infty),L^1(\mathbb{T}^d\times \mathbb{R}^d)).$ Hence, we can use  the Lebesgue dominated convergence  theorem to pass to the limit 
  and obtain 
 $$\int_{\mathbb{T}^d} \int_{\mathbb{R}^d}|f(T)-g(T)| dpdx\leq \int_{\mathbb{T}^d} \int_{\mathbb{R}^d}|f_0-g_0| dpdx.$$ Since $T>0$ is arbitrary, we obtain the  claimed result.

\end{proof}
\begin{corollary}\label{Cor:bound}
Let $f,\,g \in C([0,\infty), L^1\cap  L^{2}(\mathbb{T}^d\times \mathbb{R}^d))$  be two weak solutions of \eqref{Eq} with  initial conditions $f_0$ and $g_0,$ respectively. Assume $f$ and $g$ have regularity as in \eqref{reg}. If $$f_0(x,p)\leq g_0(x,p)$$ for almost every $(x, p)\in\mathbb{T}^d\times \mathbb{R}^{d}, $  then $$f(t,x,p)\leq g(t,x,p)$$ for almost every $(x, p)\in\mathbb{T}^d\times \mathbb{R}^{d} $ and  for all $t\geq 0.$ 

In particular, assume there exist constants $\beta_->0$ and $\beta_+>0$ such that 
\begin{equation}\label{f_0bound}
\frac{\beta_{-} M(p)}{1-\kappa \beta_{-} M(p)}\leq f_0(x,p)\leq \frac{\beta_{+} M(p)}{1-\kappa \beta_{+} M(p)}, \, \, \, \forall\, x \in \mathbb{T}^d,\, \, \,  \forall\,p\in \mathbb{R}^d,
\end{equation}
where we require $\beta_{+}<(2\pi )^{d/2}$ when $\kappa=1.$
 Then  the solution $f$ satisfies the same bounds, i.e.,
\begin{equation}\label{bound}
\frac{\beta_{-} M(p)}{1-\kappa \beta_{-} M(p)}\leq f(t,x,p)\leq \frac{\beta_{+} M(p)}{1-\kappa \beta_{+} M(p)},\, \, \, \forall\, x \in \mathbb{T}^d,\, \, \,  \forall\,p\in \mathbb{R}^d,\, \, \forall\, t\geq 0.
\end{equation}
\end{corollary}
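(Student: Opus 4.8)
The plan is to deduce both assertions from a one-sided refinement of Lemma \ref{max.pri}, followed by a comparison of $f$ against explicit stationary solutions that serve as sub- and super-solutions.

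\textbf{The comparison principle.} I would re-run the proof of Lemma \ref{max.pri} almost verbatim, replacing the odd regularization $\text{sign}_{\varepsilon}$ of the sign function by a smooth, non-decreasing regularization $s_{\varepsilon}$ of the one-sided Heaviside function, i.e.\ $s_{\varepsilon}(u)=0$ for $u\le 0$, $s_{\varepsilon}(u)=1$ for $u\ge \varepsilon$, with $s_{\varepsilon}'\ge 0$; correspondingly the primitive $|\cdot|_{\varepsilon}$ is replaced by the primitive $(\cdot)_+^{\varepsilon}$ of $s_{\varepsilon}$ vanishing at $0$, which converges pointwise to the positive part $(\cdot)_+$ as $\varepsilon\to 0$. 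Testing the equation for $f_n-g_n$ with $\zeta_{\epsilon}(p)\,s_{\varepsilon}(f_n-g_n)$, the only structural properties of $\text{sign}_{\varepsilon}$ used in Lemma \ref{max.pri} are (a) monotonicity, which guarantees that the dissipative contribution $\zeta_{\epsilon}\,s_{\varepsilon}'(f_n-g_n)\,|\nabla_p(f_n-g_n)|^2\ge 0$ carries the favorable sign and may be discarded, and (b) the uniform bound $|s_{\varepsilon}|\le 1$, which makes \eqref{com_p R} applicable to the remainder $R_n(f)-R_n(g)$ with $\psi\colonequals\zeta_{\epsilon}(p)\,s_{\varepsilon}(f_n-g_n)$. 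Both hold here. Passing to the limits $n\to\infty$, then $\epsilon\to 0$ (using the bounds \eqref{gr.z} and the regularity \eqref{reg} exactly as before) and $\varepsilon\to 0$, yields
\[
\int_{\mathbb{T}^d}\int_{\mathbb{R}^d}(f(T)-g(T))_+\,dpdx\le \int_{\mathbb{T}^d}\int_{\mathbb{R}^d}(f_0-g_0)_+\,dpdx,\qquad \forall\, T\ge 0.
\]
If $f_0\le g_0$ a.e., the right-hand side vanishes, hence $(f(t)-g(t))_+=0$ a.e.\ for all $t\ge 0$, i.e.\ $f(t)\le g(t)$ a.e.

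\textbf{The explicit barriers.} For the second claim I would use that, for any admissible constant $\beta$, the function $h_{\beta}(p)\colonequals\frac{\beta M(p)}{1-\kappa\beta M(p)}$ is a stationary (time- and $x$-independent) classical, hence weak, solution of \eqref{Eq}. Indeed, it is independent of $x$, so the transport term vanishes; and a direct computation shows that the Fokker-Planck flux factorizes as $\nabla_p f+p(f+\kappa f^2)=f(1+\kappa f)\,\nabla_p\ln\frac{f}{M(1+\kappa f)}$, which vanishes for $f=h_{\beta}$ because $\frac{h_{\beta}}{M(1+\kappa h_{\beta})}\equiv\beta$ is constant in $p$ (this is precisely the local-equilibrium identity underlying \eqref{loc}). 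Thus $\text{div}_p(\nabla_p h_{\beta}+p(h_{\beta}+\kappa h_{\beta}^2))=0$. For $\kappa=1$ the hypothesis $\beta_+<(2\pi)^{d/2}$ guarantees $1-\beta_+M>0$, so $h_{\beta_+}$ (and a fortiori $h_{\beta_-}$) is finite, smooth, and rapidly decaying in $p$; hence $h_{\beta_\pm}\in C([0,\infty),L^1\cap L^{\infty})$ with $\nabla_x h_{\beta_\pm}=0$ and $\nabla_p h_{\beta_\pm}\in L^1\cap L^2$, so \eqref{reg} holds.

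\textbf{Conclusion and main obstacle.} With the barriers in hand I would apply the comparison principle twice. Since $h_{\beta_-}\le f_0$ a.e.\ by \eqref{f_0bound} and $h_{\beta_-}$ is a solution with initial datum $h_{\beta_-}$, the first part gives $h_{\beta_-}\le f(t)$ a.e.\ for all $t\ge 0$; since $f_0\le h_{\beta_+}$ a.e.\ and $h_{\beta_+}$ is a solution with initial datum $h_{\beta_+}$, it gives $f(t)\le h_{\beta_+}$ a.e.\ for all $t\ge 0$. Together these are exactly the bounds \eqref{bound}. I expect the only genuine work to be in the first part: verifying that every limiting step of the (already delicate) proof of Lemma \ref{max.pri} survives the replacement of the two-sided regularization by the one-sided one. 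Since that proof exploits only monotonicity and the sup-norm bound of the regularized sign, both shared by $s_{\varepsilon}$, this is essentially bookkeeping rather than a new idea, while the stationarity of $h_{\beta}$ and the two applications of comparison are routine.
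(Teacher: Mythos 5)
Your proposal is correct, and for the first (comparison) part it takes a genuinely different route from the paper. The paper never touches the regularization argument again: it deduces order preservation \emph{softly} from two facts already proven, namely the two-sided $L^1$ contraction of Lemma \ref{max.pri} and mass conservation, via the identity $2(f-g)_+=|f-g|+(f-g)$. Integrating this, the $L^1$ contraction controls the first summand, mass conservation makes the second summand constant in time, and the hypothesis $f_0\le g_0$ makes the initial value of the sum equal to zero; hence $\int\int (f(t)-g(t))_+\,dpdx=0$. You instead prove the one-sided contraction $\|(f(T)-g(T))_+\|_{L^1}\le\|(f_0-g_0)_+\|_{L^1}$ directly, by re-running the proof of Lemma \ref{max.pri} with a monotone regularization $s_{\varepsilon}$ of the Heaviside function. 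Your key observation is accurate: that proof uses only (a) $s_{\varepsilon}'\ge 0$ to discard the dissipation term, (b) $|s_{\varepsilon}|\le 1$ to invoke \eqref{com_p R} (with $\psi$ depending on $n$) and to control the $\nabla_p\zeta_{\epsilon}$ terms, and (c) the pointwise limits $P_{\varepsilon}(u)\to u_+$ and $u\,s_{\varepsilon}(u)-P_{\varepsilon}(u)\to 0$, which are the exact one-sided analogues of the facts used for $\mathrm{sign}_{\varepsilon}$; one caveat is that the limits should be taken in the paper's order ($\varepsilon\to 0$ at fixed $n$, then $n\to\infty$, then $\epsilon\to 0$), since the integration-by-parts identities are exploited while $f_n,g_n$ are still smooth. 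The trade-off: the paper's argument is essentially free once Lemmas 3.3 and 3.4 are available, but it leans on the global conservation law; your argument repeats the delicate triple-limit machinery, but is self-contained at the level of the contraction estimate and would survive in settings where mass is not conserved (e.g.\ comparison of sub- and supersolutions). The second part of your proof — identifying $h_{\beta_\pm}(p)=\frac{\beta_\pm M}{1-\kappa\beta_\pm M}$ as stationary weak solutions satisfying \eqref{reg} and applying the comparison principle twice — is exactly the paper's (one-line) argument, with the verification of stationarity and admissibility spelled out more carefully than in the paper itself.
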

\begin{proof}
 The  contraction of the $L^1$ norm and the mass conservation imply 
 \begin{align*}
 0&\leq \int_{\mathbb{T}^d} \int_{\mathbb{R}^d}(|f(t,x,p)-g(t,x,p)|+f(t,x,p)-g(t,x,p)) dpdx \\&\leq \int_{\mathbb{T}^d} \int_{\mathbb{R}^d}(|f_0(x,p)-g_0(x,p)|+f_0(x,p)-g_0(x,p)) dpdx\\
 &=\int_{\mathbb{T}^d} \int_{\mathbb{R}^d}(g_0-f_0+f_0-g_0) dpdx=0.
 \end{align*}
 This shows $|f(t,x,p)-g(t,x,p)|+f(t,x,p)-g(t,x,p)= 0$ for a.e.  $(x,p)$ and  for all $t\geq 0,$ which means $f(t,x,p)\leq g(t,x,p)$ for a.e. $(x,p)$ and  for all $t\geq 0.$
 
 \eqref{bound} follows from the fact that  $\frac{\beta_{\pm} M(p)}{1-\kappa \beta_{\pm} M(p)}$  solves \eqref{Eq}.
\end{proof}
\section{Lyapunov functional and exponential decay}
In this section we shall construct (in several steps) a Lyapunov functional for the decay analysis of \eqref{Eq}, i.e., for the proof of our main result, Theorem \ref{main th}.
\subsection{Projection operator}
We  denote the macroscopic densities
$$\rho_{\infty}\colonequals \int_{\mathbb{R}^d}f_{\infty}(p)dp$$
and
 $$\rho(t,x)\colonequals \int_{\mathbb{R}^d}f(t,x,p)dp.$$
We define the nonlinear projection operator 
\begin{equation}\label{pr}
\Pi f(t,x,p)\colonequals\frac{\beta(t,x)M(p)}{1-\kappa \beta(t,x)M(p)}, \, \, \, f(t,x,\cdot)\in L^2(\mathbb{R}^d, M^{-1}),
\end{equation}
where $\beta(t,x)$ is uniquely chosen by the condition
\begin{equation*}
\int_{\mathbb{R}^d}\Pi fdp=\rho(t,x).
\end{equation*}
$\Pi f$ represents the projection of  $f$ to the manifold of functions which are local equilibria for \eqref{Eq}.  For $p=(p_1,...,p_d)^T\in \mathbb{R}^d,$ we have 
\begin{equation}\label{pP=0}
\int_{\mathbb{R}^d}p_i \Pi fdp=\int_{\mathbb{R}^d}p_i\frac{\beta(t,x)M(p)}{1-\kappa \beta(t,x)M(p)}dp=0, \, \, \, i \in \{1,..., d\}.
\end{equation}
Since $\partial_{p_i}\Pi f=-p_i(\Pi f+\kappa (\Pi f)^2),$ we have
\begin{equation}\label{vP=0}
\int_{\mathbb{R}^d}p_i(\Pi f+\kappa (\Pi f)^2)dp=-\int_{\mathbb{R}^d}\partial_{p_i}\Pi fdp=0,
\end{equation}
and 
\begin{equation}\label{v_iv_jP}
\int_{\mathbb{R}^d}p_ip_j(\Pi f+\kappa (\Pi f)^2) dp=-\int_{\mathbb{R}^d}p_i\partial_{p_j}\Pi f dp=\delta_{ij}\rho,\, \, \, i,j\in\{1,...,d\},
\end{equation}
where  $\delta_{ij}=\begin{cases}1,
\,\,\, \text{if}\,\, i=j\\
0, \,\,\, \text{if}\,\, i\neq j
\end{cases}$ is the Kronecker delta function. 

\begin{lemma} 
Let $f\in C([0,\infty), L^1\cap  L^{2}(\mathbb{T}^d\times \mathbb{R}^d))$  be a weak solution of \eqref{Eq} with  initial condition $f_0$ satisfying \eqref{f_0}. Assume $f$ has regularity as in \eqref{reg}.
 Then there are constants $C_1>0$ and  $C_2>0$ (depending only on $\beta_-$ and $\beta_+$)  such that \begin{equation}\label{rr} C_1\frac{(\Pi f-f_{\infty})^2}{M^2}\leq (\rho-\rho_{\infty})^2\leq C_2\frac{(\Pi f-f_{\infty})^2}{M^2}.
\end{equation}
\end{lemma}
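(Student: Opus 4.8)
The plan is to exploit that, pointwise in $(t,x)$, both $\Pi f-f_\infty$ and $\rho-\rho_\infty$ are the difference of the \emph{same} profile evaluated at two values of the parameter $\beta$. Write $\Phi(b,p)\colonequals \frac{bM(p)}{1-\kappa b M(p)}$, so that $\Pi f=\Phi(\beta(t,x),p)$ and $f_\infty=\Phi(\beta_\infty,p)$, where I abbreviate $\beta=\beta(t,x)$. A direct computation gives
$$\partial_b \Phi(b,p)=\frac{M(p)}{(1-\kappa b M(p))^2}>0,$$
so $\Phi(\cdot,p)$ is strictly increasing, and the fundamental theorem of calculus (in the integral-remainder form, to avoid any measurability issue from a $p$-dependent intermediate point) yields
$$\Pi f-f_\infty=(\beta-\beta_\infty)\int_0^1 \frac{M(p)}{(1-\kappa b_s M(p))^2}\,ds,\qquad b_s\colonequals \beta_\infty+s(\beta-\beta_\infty).$$
Integrating in $p$ and using $\int_{\mathbb{R}^d}\Pi f\,dp=\rho$ and $\int_{\mathbb{R}^d}f_\infty\,dp=\rho_\infty$ gives the analogous representation for $\rho-\rho_\infty$ with the same integrand additionally integrated over $p$.

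First I would pin down the range of the parameters. Since $\partial_b\Phi>0$, the map $b\mapsto G(b)\colonequals \int_{\mathbb{R}^d}\Phi(b,p)\,dp$ is strictly increasing. The maximum principle bound \eqref{bound} gives $\Phi(\beta_-,p)\leq f(t,x,p)\leq \Phi(\beta_+,p)$, and integrating in $p$ together with $\rho=\int f\,dp=G(\beta)$ forces $\beta(t,x)\in[\beta_-,\beta_+]$ for all $(t,x)$. The same argument applied to $f_0$ (whose mass equals that of $f_\infty$ by mass conservation) gives $\rho_\infty=G(\beta_\infty)\in[G(\beta_-),G(\beta_+)]$, hence $\beta_\infty\in[\beta_-,\beta_+]$. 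Consequently every convex combination $b_s$ lies in $[\beta_-,\beta_+]$ as well.

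The crux is then a uniform two-sided bound on the denominator $(1-\kappa b M(p))^2$ for $b\in[\beta_-,\beta_+]$ and all $p$. Using $0<M(p)\leq M(0)=(2\pi)^{-d/2}$ one obtains explicit constants $0<m_1\leq (1-\kappa b M(p))^2\leq m_2$: for fermions ($\kappa=-1$) one may take $m_1=1$ and $m_2=(1+\beta_+(2\pi)^{-d/2})^2$; for bosons ($\kappa=1$), where the hypothesis $\beta_+<(2\pi)^{d/2}$ keeps us away from the singularity, one may take $m_1=(1-\beta_+(2\pi)^{-d/2})^2>0$ and $m_2=1$. With these bounds, dividing the representation of $\Pi f-f_\infty$ by $M$ gives
$$\frac{\Pi f-f_\infty}{M}=(\beta-\beta_\infty)\int_0^1 \frac{ds}{(1-\kappa b_s M)^2},\qquad \frac{1}{m_2}\leq \int_0^1 \frac{ds}{(1-\kappa b_s M)^2}\leq \frac{1}{m_1},$$
while $\rho-\rho_\infty=(\beta-\beta_\infty)\int_{\mathbb{R}^d}\big(\int_0^1 (1-\kappa b_s M)^{-2}\,ds\big)M\,dp$ has the same per-$p$ factor, so that (using $\int_{\mathbb{R}^d}M\,dp=1$) the coefficient of $(\beta-\beta_\infty)$ also lies in $[1/m_2,1/m_1]$. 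Taking the quotient of these two identities—the case $\beta=\beta_\infty$ being trivial since both sides of \eqref{rr} then vanish—eliminates $\beta-\beta_\infty$ and yields
$$\frac{m_1}{m_2}\,|\rho-\rho_\infty|\leq \frac{|\Pi f-f_\infty|}{M}\leq \frac{m_2}{m_1}\,|\rho-\rho_\infty|.$$
Squaring gives \eqref{rr} with $C_1=(m_1/m_2)^2$ and $C_2=(m_2/m_1)^2$, which depend only on $\beta_+$ (and the fixed dimension $d$), as claimed.

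I do not expect a genuine obstacle here; the only points requiring care are keeping $\beta$ and $\beta_\infty$ inside $[\beta_-,\beta_+]$, which rests on Corollary \ref{Cor:bound} and the monotonicity of $G$, and, for bosons, using $\beta_+<(2\pi)^{d/2}$ to bound the denominator away from zero so that $m_1>0$. The integral-remainder representation makes the two-sided comparison completely mechanical once the parameter range is fixed.
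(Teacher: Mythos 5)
Your proof is correct and follows essentially the same route as the paper: both arguments factor $\Pi f-f_\infty$ and $\rho-\rho_\infty$ as $(\beta-\beta_\infty)$ times a weight uniformly comparable to $M$, bound that weight above and below using $\beta,\beta_\infty\in[\beta_-,\beta_+]$, and take the quotient --- indeed your integral remainder $\int_0^1(1-\kappa b_sM)^{-2}\,ds$ evaluates exactly to the paper's algebraic factor $\bigl((1-\kappa\beta M)(1-\kappa\beta_\infty M)\bigr)^{-1}$. The only (welcome) addition is that you justify $\beta_\infty\in[\beta_-,\beta_+]$ explicitly via mass conservation and monotonicity of $b\mapsto\int_{\mathbb{R}^d}\Phi(b,p)\,dp$, a point the paper leaves implicit.
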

\begin{proof}
 By the definition of $\Pi$ and \eqref{eqilib} we have  
$$\beta-\beta_{\infty}=\frac{\Pi f-f_{\infty}}{M(1+\kappa\Pi f)(1+\kappa f_{\infty})}$$
and $$\rho-\rho_{\infty}=\int_{\mathbb{R}^d}(\Pi f-f_{\infty})dp=(\beta-\beta_{\infty})\int_{\mathbb{R}^d}\frac{Mdp}{(1-\kappa \beta M)(1-\kappa \beta_{\infty}M)}.$$
These equations imply 
$$(\rho-\rho_{\infty})^2=\frac{(\Pi f-f_{\infty})^2}{M^2(1+\kappa \Pi f)^2(1+\kappa f_{\infty})^2}\left(\int_{\mathbb{R}^d}\frac{M dp}{(1-\kappa \beta M)(1-\kappa \beta_{\infty}M)}\right)^2.$$
 $f$ satisfies \eqref{bound} and so  $0<\beta_{-}\leq \beta(t,x)\leq \beta_{+}$ for all $t\geq 0$ and $x\in \mathbb{T}^d$. Therefore, $1+\kappa \Pi f$ and  $1+\kappa f_{\infty}$  are bounded from below and above by positive constants.  The same claim holds for  $\displaystyle \int_{\mathbb{R}^d}\frac{M dp}{(1-\kappa \beta M)(1-\kappa \beta_{\infty}M)}.$ This implies  the claimed inequalities.
\end{proof}

\subsection{Relative entropy}
For non-negative functions $f$ and $g$ on $\mathbb{T}^d\times \mathbb{R}^d,$ we define the relative entropy functional 
\begin{equation}\label{Hch}
\mathrm{H}[f|g]\colonequals  \int_{\mathbb{T}^d}\int_{\mathbb{R}^d} \left(f \ln{\frac{f}{g}}-\kappa(1+\kappa f)\ln{\frac{1+\kappa f}{1+\kappa g}}\right)dpdx.
\end{equation}
We note, if $f$ and $f_{\infty}$ have the same mass, then $$\mathrm{H}[f|f_{\infty}]=  \mathrm{H}[f]-\mathrm{H}[f_{\infty}].$$ 
We study some properties of this functional.
\begin{lemma}\label{C<H<C}
Let $f$ and $g$ be two non-negative functions  satisfying \eqref{bound}. Then there are constants $C_3>0$ and $C_4>0$ (depending only on $\beta_-$ and $\beta_+$) such that
\begin{equation}\label{C_1<C_2}
C_3||f-g||^2\leq \mathrm{H}[f|g]\leq C_4||f-g||^2.
\end{equation}
Moreover, it holds that $f,g\in L^2(\mathbb{T}^d\times \mathbb{R}^d, M^{-1}).$
\end{lemma}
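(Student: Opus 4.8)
The plan is to recognize the integrand of $\mathrm{H}[f|g]$ as the Bregman divergence of the strictly convex entropy density and to Taylor-expand it to second order. I set $\phi(s)\colonequals s\ln s-\kappa(1+\kappa s)\ln(1+\kappa s)$ on the relevant range ($s\in(0,1)$ if $\kappa=-1$, $s>0$ if $\kappa=1$). A direct computation, using $\kappa^2=1$, gives $\phi'(s)=\ln\frac{s}{1+\kappa s}$ and $\phi''(s)=\frac{1}{s(1+\kappa s)}>0$. One then checks that the integrand of $\mathrm{H}[f|g]$ is exactly the Bregman divergence $D_\phi(f,g)=\phi(f)-\phi(g)-\phi'(g)(f-g)$; by Taylor's theorem with integral remainder this equals $\theta(x,p)\,(f-g)^2$ with $\theta(x,p)\colonequals\int_0^1(1-t)\,\phi''\big(g+t(f-g)\big)\,dt$. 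Since $f,g$ satisfy \eqref{bound}, every intermediate value $g+t(f-g)$, $t\in[0,1]$, lies in the interval $[m_-(p),m_+(p)]$, where $m_\pm(p)\colonequals\frac{\beta_\pm M(p)}{1-\kappa\beta_\pm M(p)}$.

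The heart of the argument is a uniform two-sided bound on $M(p)\,\phi''(\xi)$ for $\xi\in[m_-(p),m_+(p)]$. For each fixed $p$ the strictly increasing map $\beta\mapsto\frac{\beta M(p)}{1-\kappa\beta M(p)}$ carries $[\beta_-,\beta_+]$ onto $[m_-(p),m_+(p)]$, so I may write $\xi=\frac{\beta M(p)}{1-\kappa\beta M(p)}$ with $\beta=\beta(\xi,p)\in[\beta_-,\beta_+]$. Using the identity $1+\kappa\xi=\frac{1}{1-\kappa\beta M(p)}$ one finds $M(p)\,\phi''(\xi)=\frac{(1-\kappa\beta M(p))^2}{\beta}$. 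Since $0<M(p)\leq(2\pi)^{-d/2}$, this quantity is bounded above and below by positive constants $B\geq A>0$ depending only on $\beta_-,\beta_+$ (and the fixed dimension $d$): for $\kappa=-1$ one has $1+\beta M\in[1,\,1+\beta_+(2\pi)^{-d/2}]$, while for $\kappa=1$ the assumption $\beta_+<(2\pi)^{d/2}$ guarantees $1-\beta M\geq 1-\beta_+(2\pi)^{-d/2}>0$, keeping the numerator away from zero. Consequently $\frac{A}{M(p)}\leq\phi''(\xi)\leq\frac{B}{M(p)}$ for all admissible $\xi$, hence pointwise $\frac{A}{2}\frac{(f-g)^2}{M}\leq D_\phi(f,g)\leq\frac{B}{2}\frac{(f-g)^2}{M}$. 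Integrating over $\mathbb{T}^d\times\mathbb{R}^d$ then yields \eqref{C_1<C_2} with $C_3=A/2$ and $C_4=B/2$.

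Finally, for the membership claim I use only the upper bound in \eqref{bound}: from $0\leq f\leq m_+(p)$ I get $\frac{f^2}{M}\leq\frac{m_+(p)^2}{M}=\frac{\beta_+^2 M(p)}{(1-\kappa\beta_+M(p))^2}\leq C\,M(p)$, where $C$ is finite by the reasoning above (again the condition $\beta_+<(2\pi)^{d/2}$ is what prevents a singularity at $p=0$ when $\kappa=1$). Since $\int_{\mathbb{T}^d}\int_{\mathbb{R}^d}M\,dp\,dx=|\mathbb{T}^d|<\infty$, this shows $f\in L^2(\mathbb{T}^d\times\mathbb{R}^d,M^{-1})$, and the same estimate applies to $g$; in particular $f-g\in L^2(\mathbb{T}^d\times\mathbb{R}^d,M^{-1})$, so $\|f-g\|$ is finite and \eqref{C_1<C_2} is meaningful. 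I expect the main (though still mild) obstacle to be the boson case $\kappa=1$ near $p=0$: both the equivalence constants and the $L^2(M^{-1})$-integrability hinge on $1-\beta_+M$ staying bounded away from zero, which is precisely what the hypothesis $\beta_+<(2\pi)^{d/2}$ secures.
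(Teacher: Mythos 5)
Your proof is correct and follows essentially the same route as the paper: the paper likewise Taylor-expands the integrand of $\mathrm{H}[f|g]$ to second order in $f$ about $g$ (with Lagrange rather than integral remainder), obtaining $\frac{(f-g)^2}{2\xi(1+\kappa \xi)}$ with $\xi$ between $f$ and $g$, and then uses \eqref{bound} to conclude that $2\xi(1+\kappa\xi)$ is comparable to $M$. Your Bregman-divergence framing, the explicit computation $M\phi''(\xi)=\frac{(1-\kappa\beta M)^2}{\beta}$, and the spelled-out $L^2(\mathbb{T}^d\times\mathbb{R}^d,M^{-1})$ membership argument merely make explicit the details the paper leaves to the reader.
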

\begin{proof}
 We consider the integrand in \eqref{Hch}: By its Taylor expansion in the variable $f$ about $g$ with Lagrange reminder term, we obtain 
$$f \ln{\frac{f}{g}}-\kappa(1+\kappa f)\ln{\frac{1+\kappa f}{1+\kappa g}}=\frac{(f-g)^2}{2\xi(1+\kappa \xi)},$$
with a function $\xi=\xi(x,p)$ lying between $f$ and $g.$
Due to the assumptions on $f$ and $g,$ $\xi$ satisfies \eqref{bound}. As a consequence
$$\frac{M}{C}\leq 2 \xi(1+\kappa \xi)\leq \frac{M}{C'}$$
holds with appropriate positive constants $C$ and $ C',$ completing the proof.
\end{proof}
\begin{lemma}
Let $f\in C([0,\infty), L^1\cap  L^{2}(\mathbb{T}^d\times \mathbb{R}^d))$ be a weak solution to \eqref{Eq} with the initial condition $f_0$ which satisfies \eqref{f_0bound}.
 Assume $f$ has regularity as in \eqref{reg}.
 Then there is a constant $C_5>0$ such that
\begin{equation}\label{dt H<-C_5}
\mathrm{H}[f(T)|f_{\infty}]\leq \mathrm{H}[f_0|f_{\infty}] -C_5\int_0^T||f(t)-\Pi f(t)||^2dt, \, \,\, \forall\, T>0.
\end{equation}
\end{lemma}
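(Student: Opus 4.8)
The plan is to reduce \eqref{dt H<-C_5} to two separate statements: an integrated entropy--dissipation inequality and a coercivity (generalized log-Sobolev) inequality relating the dissipation to $\|f-\Pi f\|^2$. Write the entropy dissipation functional
$$D[f]\colonequals \int_{\mathbb{T}^d}\int_{\mathbb{R}^d}(f+\kappa f^2)\left|\nabla_p\ln\frac{f}{M(1+\kappa f)}\right|^2dpdx,$$
which is the right-hand side of \eqref{dt H2}. Since the solution conserves mass and $f_\infty$ is normalized to that same mass, the remark after \eqref{Hch} gives $\mathrm{H}[f(t)|f_\infty]=\mathrm{H}[f(t)]-\mathrm{H}[f_\infty]$, so proving \eqref{dt H<-C_5} amounts to establishing (i) the dissipation inequality $\mathrm{H}[f(T)]+\int_0^T D[f(t)]\,dt\le \mathrm{H}[f_0]$ and (ii) the coercivity estimate $D[f]\ge C\,\|f-\Pi f\|^2$. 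Combining (i) and (ii) yields the claim with $C_5=C$.

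For (i) I would argue on the mollified solutions $f_n$ from \eqref{f_n}, which are smooth and, by \eqref{bound} and positivity of the mollifier, bounded away from $0$ and $\infty$ on compact $p$-sets. Testing \eqref{eq.fn} against $\zeta_\epsilon(p)\ln\frac{f_n}{M(1+\kappa f_n)}$, with the cut-off $\zeta_\epsilon$ from \eqref{zeta}, and integrating by parts, the $-\Delta_p f_n$ and drift terms combine into $-\int\int\zeta_\epsilon(f_n+\kappa f_n^2)\big|\nabla_p\ln\frac{f_n}{M(1+\kappa f_n)}\big|^2$ plus commutator terms carrying $\nabla_p\zeta_\epsilon$, while the $x$-transport term $p\cdot\nabla_x(f_n+\kappa f_n^2)$ integrates to zero over $\mathbb{T}^d$ because, $\zeta_\epsilon$ being independent of $x$, its contribution is a perfect $x$-gradient. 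Since for fixed $\epsilon$ the cut-off makes $\psi=\zeta_\epsilon\ln\frac{f_n}{M(1+\kappa f_n)}$ and $|p|\psi$ uniformly bounded in $n$, the remainder $R_n$ from \eqref{R_n} is killed by \eqref{com_p R} (which, as noted in the proof of Lemma \ref{max.pri}, tolerates the $n$-dependence of $\psi$). Letting first $n\to\infty$ using the regularity \eqref{reg}, then $\epsilon\to0$, the $\nabla_p\zeta_\epsilon$ commutators vanish by \eqref{gr.z}, \eqref{bound} and Gaussian moment bounds, the entropy terms converge by dominated convergence, and lower semicontinuity (Fatou) of the convex functional $D$ recovers the full dissipation with the correct sign, giving (i); equality is not needed.

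The core of the proof is (ii), a generalized log-Sobolev inequality made uniform by the bounds \eqref{bound}, which I would prove fibrewise in $x$. Set $\Phi(s)\colonequals s\ln s-\kappa(1+\kappa s)\ln(1+\kappa s)$, so that $\Phi'(s)=\ln\frac{s}{1+\kappa s}$ and $\Phi''(s)=\frac{1}{s(1+\kappa s)}$, and abbreviate $g\colonequals\Pi f$. Since $\Phi'(g)=\ln(\beta M)$ satisfies $\nabla_p\Phi'(g)+p=0$ (i.e. $g$ is the local equilibrium in \eqref{pr}), the integrand of $D[f]$ at fixed $x$ equals $f(1+\kappa f)\,|\nabla_p w|^2$ with $w\colonequals\Phi'(f)-\Phi'(g)=\ln\frac{f}{M(1+\kappa f)}-\ln\beta$; by \eqref{bound} the weight obeys $f(1+\kappa f)\ge aM$, hence $D_x[f]\ge a\int_{\mathbb{R}^d}M|\nabla_p w|^2\,dp$. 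A mean value expansion of $w$ gives $f-g=w/\Phi''(\xi)=\xi(1+\kappa\xi)\,w$ for some $\xi$ between $f$ and $g$, so $\xi(1+\kappa\xi)\sim M$ by \eqref{bound}. The mass constraint $\int_{\mathbb{R}^d}(f-g)\,dp=0$ from \eqref{pr} then reads $\int_{\mathbb{R}^d}\xi(1+\kappa\xi)\,w\,dp=0$, i.e. $w$ has zero mean against the probability measure $d\tilde\nu\colonequals\xi(1+\kappa\xi)\,dp\,/\!\int_{\mathbb{R}^d}\xi(1+\kappa\xi)\,dp$, whose density relative to the Gaussian $M\,dp$ is bounded above and below by positive constants. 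The Gaussian Poincar\'e inequality together with the Holley--Stroock perturbation principle then gives $\int w^2\,d\tilde\nu\le C\int|\nabla_p w|^2\,d\tilde\nu$, that is $\int_{\mathbb{R}^d}Mw^2\,dp\le C\int_{\mathbb{R}^d}M|\nabla_p w|^2\,dp$. Finally $\frac{(f-g)^2}{M}=\frac{(\xi(1+\kappa\xi))^2}{M}\,w^2\sim Mw^2$, so integrating over $x$ yields $D[f]\ge C\|f-\Pi f\|^2$ with all constants depending only on $\beta_-,\beta_+$; this is the inequality \eqref{dtHint} referenced in the overview.

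I expect the main obstacle to be the rigorous justification of (i): unlike for smooth solutions, where \eqref{dt H2} is a direct computation, the test function $\ln\frac{f_n}{M(1+\kappa f_n)}$ grows quadratically in $p$ and is nonlinear in $f_n$, so one must pair it carefully with the quadratic remainder $R_n$ and pass to the limit without presupposing that the entropy identity holds with equality — the cut-off $\zeta_\epsilon$, the uniform bounds \eqref{bound}, and the lower semicontinuity of $D$ are exactly what make this admissible. The functional inequality (ii) is conceptually the key new ingredient but becomes routine once \eqref{bound} reduces it to a perturbed Gaussian Poincar\'e inequality. Combining (i) and (ii) proves \eqref{dt H<-C_5}.
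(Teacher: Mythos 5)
Your proof is correct, and its first half (the integrated dissipation inequality (i)) follows essentially the same path as the paper: mollification $f_n$, momentum cut-off $\zeta_\epsilon$, testing \eqref{eq.fn} with $\zeta_\epsilon \ln\frac{f_n}{\beta_\infty M(1+\kappa f_n)}$ (whose boundedness comes from the fact that $f_n$ inherits the bounds \eqref{bound}), killing the remainder $R_n$ via \eqref{com_p R}, and passing $n\to\infty$ with lower semicontinuity of the dissipation, then $\epsilon\to0$ with monotone/dominated convergence; also your observation that the $x$-transport term is a perfect $x$-gradient on $\mathbb{T}^d$ is exactly what makes the paper's stated identity for $\frac{d}{dt}\mathrm{H}_\epsilon[f_n|f_\infty]$ hold. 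Where you genuinely diverge is the coercivity step (ii). The paper does not prove it directly: it invokes the generalized logarithmic Sobolev inequality of \cite{1D, FDFP}, namely $\mathrm{H}[f|\Pi f]\leq \frac{1}{C''}\int_{\mathbb{T}^d}\int_{\mathbb{R}^d}\frac{|\nabla_p f+p(f+\kappa f^2)|^2}{f+\kappa f^2}\,dpdx$, and then converts $\mathrm{H}[f|\Pi f]$ into $\|f-\Pi f\|^2$ through the equivalence of Lemma \ref{C<H<C}. You instead bound $\|f-\Pi f\|^2$ by the dissipation directly and fibrewise in $x$: with $w$ the difference of entropy variables, the mass constraint defining $\Pi$ makes $w$ mean-zero against the measure with density $\xi(1+\kappa\xi)$, which by \eqref{bound} is pinched between two multiples of the Gaussian $M$, so the Gaussian Poincar\'e inequality plus the Holley--Stroock perturbation argument closes the estimate. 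What each approach buys: the paper's route is shorter on the page but outsources the key functional inequality to the literature and detours through the relative entropy $\mathrm{H}[f|\Pi f]$; your route is self-contained, uses only a Poincar\'e inequality (strictly weaker than log-Sobolev, and exactly the right strength since the conclusion is an $L^2$ statement), and makes the dependence of $C_5$ on $\beta_-,\beta_+$ completely explicit. Both arguments yield constants depending only on $\beta_-$ and $\beta_+$, as required.
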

\begin{proof}
 Let $f_n$ and $\zeta_{\epsilon}\geq 0$ be defined as in \eqref{f_n} and \eqref{zeta}, respectively.
We consider $$\mathrm{H}_{\epsilon}[f_n|f_{\infty}]\colonequals \int_{\mathbb{T}^d}\int_{\mathbb{R}^d} \zeta_{\epsilon}(p)\left(f_n \ln{\frac{f_n}{f_{\infty}}}-\kappa(1+\kappa f_n)\ln{\frac{1+\kappa f_n}{1+\kappa f_{\infty}}}\right)dpdx.$$ 
This functional differs from $\mathrm{H}[f_n|f_{\infty}]$ by the appearance of the function $\zeta_{\epsilon}(p).$ It makes the integrand in $\mathrm{H}_{\epsilon}[f_n|f_{\infty}]$ have compact support with respect to $p,$ which will be  used to pass to the limit as $n\to \infty.$ Later we will show $\mathrm{H}_{\epsilon}[f_n|f_{\infty}] \to \mathrm{H}[f_n|f_{\infty}]$ as $\epsilon\to 0.$

\textbf{Step 1,} \textbf{$n\to \infty$:}
The assumptions of the lemma  provide that the solution $f$ satisfies the bounds \eqref{bound}. Hence, since $\eta_n\geq 0$ we have 
$$\eta_n *\frac{\beta_{-} M}{1-\kappa \beta_{-} M}\leq f_n\leq \eta_n *\frac{\beta_{+} M}{1-\kappa \beta_{+} M}, \, \, \, \forall\, x \in \mathbb{T}^d,\, \forall\, p\in \mathbb{R}^d,\,\, \forall\, t\geq 0.$$
 $\eta_n *\frac{\beta_{\pm} M}{1-\kappa \beta_{\pm} M}$ converges to $\frac{\beta_{\pm} M}{1-\kappa \beta_{\pm} M}$ uniformly (see \cite[Theorem 8.14]{Foll}). Hence,
one can show that if $n$ is large enough, then $f_n$ also satisfies the bounds \eqref{bound} with constants $\beta_{-,n}$ and  $\beta_{+,n}$ which satisfy  
$$\lim_{n\to \infty}\beta_{-,n}=\beta_{-},\, \, \, \, \lim_{n\to \infty}\beta_{+,n}=\beta_{+}.$$
  By using the proof of Lemma \ref{C<H<C} we can show that $\mathrm{H}_{\epsilon}[f_n|f_{\infty}]$ is equivalent to $$\displaystyle \int_{\mathbb{T}^d\times \mathbb{R}^d}\zeta_{\epsilon}\frac{(f_n-f_{\infty})^2}{M}dpdx.$$
Hence,  $\mathrm{H}_{\epsilon}[f_n|f_{\infty}]$ is well-defined and finite.  

 We use \eqref{eq.fn} to compute 
\begin{align*}
\frac{d}{dt}\mathrm{H}_{\epsilon}[f_n(t)|f_{\infty}]=&-\int_{\mathbb{T}^d}\int_{\mathbb{R}^d}\zeta_{\epsilon}(p)\frac{|\nabla_p f_n +p(f_n+\kappa f^2_n)|^2}{f_n(1+\kappa f_n)}dpdx\\
&-\int_{\mathbb{T}^d}\int_{\mathbb{R}^d}\nabla_p \zeta_{\epsilon}(p)\cdot (\nabla_p f_n +p(f_n+\kappa f^2_n))\ln\left({\frac{f_n}{\beta_{\infty}M(1+\kappa f_n)}}\right)dpdx\\
&+\int_{\mathbb{T}^d}\int_{\mathbb{R}^d} \zeta_{\epsilon}(p) R_n(f)\ln\left({\frac{f_n}{\beta_{\infty}M(1+\kappa f_n)}}\right)dpdx.
\end{align*}
We integrate this equation on the interval $[0,T],$ for any $T>0,$ 
\begin{align}\label{H_e}
\mathrm{H}_{\epsilon}[f_n(T)|f_{\infty}]&-\mathrm{H}_{\epsilon}[f_n(0)|f_{\infty}]\nonumber\\
=&-\int_0^T\int_{\mathbb{T}^d}\int_{\mathbb{R}^d}\zeta_{\epsilon}(p)\frac{|\nabla_p f_n +p(f_n+\kappa f^2_n)|^2}{f_n(1+\kappa f_n)}dpdxdt \nonumber \\
&-\int_0^T\int_{\mathbb{T}^d}\int_{\mathbb{R}^d}\nabla_p \zeta_{\epsilon}(p)\cdot (\nabla_p f_n +p(f_n+\kappa f^2_n))\ln\left({\frac{f_n}{\beta_{\infty}M(1+\kappa f_n)}}\right)dpdxdt \nonumber\\
&+\int_0^T\int_{\mathbb{T}^d}\int_{\mathbb{R}^d} \zeta_{\epsilon}(p) R_n(f)\ln\left({\frac{f_n}{\beta_{\infty}M(1+\kappa f_n)}}\right)dpdxdt.
\end{align}
Since $\ln\left({\frac{f_n}{\beta_{\infty}M(1+\kappa f_n)}}\right)$ is bounded and $\zeta_{\epsilon}$ has  compact support, \eqref{com_p R} shows
$$\lim_{n\to \infty}\int_0^T\int_{\mathbb{T}^d}\int_{\mathbb{R}^d} \zeta_{\epsilon}(p) R_n(f)\ln\left({\frac{f_n}{\beta_{\infty}M(1+\kappa f_n)}}\right)dpdxdt=0.$$
 The regularity assumptions \eqref{reg} imply   $|\nabla_p f|$ and  $(f+\kappa f^2)$ are in $ L^1_{loc}([0,\infty), L^1(\mathbb{T}^d\times \mathbb{R}^d)). $   $|\nabla_p \zeta_{\epsilon}|$ and $ \nabla_p \zeta_{\epsilon}\cdot p$ are bounded for any fixed $\epsilon>0,$ and there is a constant $C>0$ (independent of $n$) such that $\left|\ln\left({\frac{f_n}{\beta_{\infty}M(1+\kappa f_n)}}\right)\right|\leq C.$ Therefore, we can estimate 
 \begin{multline*}
 \lim_{n\to \infty}\int_0^T\int_{\mathbb{T}^d}\int_{\mathbb{R}^d}\left|\nabla_p \zeta_{\epsilon}(p)\cdot (\nabla_p f_n +p(f_n+\kappa f^2_n))\ln\left({\frac{f_n}{\beta_{\infty}M(1+\kappa f_n)}}\right)
 \right| dpdxdt\\\leq C\int_0^T\int_{\mathbb{T}^d}\int_{\mathbb{R}^d}\big|\nabla_p \zeta_{\epsilon}(p)\cdot (\nabla_p f +p(f+\kappa f^2))\big|dpdxdt.
 \end{multline*}
 Since $f_n(t)\to f(t)$ in $L^2(\mathbb{T}^d\times \mathbb{R}^d)$ as $n\to \infty$ for any fixed $t\geq 0$ and $\frac{\zeta_{\epsilon}}{M}$ is bounded, we have $$\lim_{n\to \infty} \int_{\mathbb{T}^d\times \mathbb{R}^d}\zeta_{\epsilon}\frac{(f_n(t)-f(t))^2}{M}dpdx= 0.$$  We can write   $$\mathrm{H}_{\epsilon}[f_n(t)|f_{\infty}]-\mathrm{H}_{\epsilon}[f(t)|f_{\infty}]=\mathrm{H}_{\epsilon}[f_n(t)|f(t)]+\int_{\mathbb{T}^d\times \mathbb{R}^d} \zeta_{\epsilon}(f_n(t)-f(t))\ln{\frac{f(t)(1+\kappa f_{\infty})}{f_{\infty}(1+\kappa f(t))}}dpdx.$$ 
 $\mathrm{H}_{\epsilon}[f_n(t)|f]$ is equivalent to $\displaystyle \int_{\mathbb{T}^d\times \mathbb{R}^d}\zeta_{\epsilon}\frac{(f_n(t)-f(t))^2}{M}dpdx $ and so $\lim_{n\to \infty}\mathrm{H}_{\epsilon}[f_n(t)|f(t)]=0.$ As $f_n(t)\to f(t)$ in $L^1(\mathbb{T}^d\times \mathbb{R}^d)$ as $n\to \infty$ for any fixed $t\geq 0$ and $\zeta_{\epsilon} \ln{\frac{f(t)(1+\kappa f_{\infty})}{f_{\infty}(1+\kappa f(t))}}$ is bounded (see \eqref{bound}), we have $$\lim_{n\to \infty}\int_{\mathbb{T}^d\times \mathbb{R}^d} \zeta_{\epsilon}(f_n(t)-f(t))\ln{\frac{f(t)(1+\kappa f_{\infty})}{f_{\infty}(1+\kappa f(t))}}=0.$$  The above limits show  $$\lim_{n\to \infty}\mathrm{H}_{\epsilon}[f_n(t)|f_{\infty}]=\mathrm{H}_{\epsilon}[f(t)|f_{\infty}]$$ for any fixed $t\geq 0.$ In particular, we can  pass to the limit in the left hand side of \eqref{H_e}:
 $$\lim_{n\to \infty}\left(\mathrm{H}_{\epsilon}[f_n(T)|f_{\infty}]-\mathrm{H}_{\epsilon}[f_n(0)|f_{\infty}]\right)=\mathrm{H}_{\epsilon}[f(T)|f_{\infty}]-\mathrm{H}_{\epsilon}[f_0|f_{\infty}].$$
 $\frac{M}{f_{n}(1+\kappa f_n)}$ is bounded from below by a positive constant $C',$ hence we can estimate 
 \begin{multline*}\int_0^T\int_{\mathbb{T}^d}\int_{\mathbb{R}^d}\zeta_{\epsilon}(p)\frac{|\nabla_p f_n +p(f_n+\kappa f^2_n)|^2}{f_n(1+\kappa f_n)}dpdxdt\\ \geq C' \int_0^T\int_{\mathbb{T}^d}\int_{\mathbb{R}^d}\zeta_{\epsilon}(p)\frac{|\nabla_p f_n +p(f_n+\kappa f^2_n)|^2}{M}dpdxdt.
 \end{multline*}
 $\frac{\zeta_{\epsilon}}{M}$ and  $\frac{\zeta_{\epsilon}|p|^2}{M}$ are bounded for any fixed $\epsilon>0.$  We explained above that $f_n$ satisfies \eqref{bound} and so $f_n$ is bounded, uniformly with respect to $n$. This shows $\nabla_pf_n\to \nabla_pf, $ $f_n+\kappa f_n^2 \to f+\kappa f^2$ in $ L^2_{loc}([0,\infty), L^2(\mathbb{T}^d\times \mathbb{R}^d))$ as $n\to \infty. $ Hence, we have 
 \begin{multline*}\liminf_{n\to \infty}\int_0^T\int_{\mathbb{T}^d}\int_{\mathbb{R}^d}\zeta_{\epsilon}(p)\frac{|\nabla_p f_n +p(f_n+\kappa f^2_n)|^2}{f_n(1+\kappa f_n)}dpdxdt\\\geq C'\int_0^T\int_{\mathbb{T}^d}\int_{\mathbb{R}^d}\zeta_{\epsilon}(p)\frac{|\nabla_p f +p(f+\kappa f^2)|^2}{M}dpdxdt.
 \end{multline*}
 These limits yield 
 \begin{align}\label{lim0}
\mathrm{H}_{\epsilon}[f(T)|f_{\infty}]-\mathrm{H}_{\epsilon}[f_0|f_{\infty}]
\leq &-C'\int_0^T\int_{\mathbb{T}^d}\int_{\mathbb{R}^d}\zeta_{\epsilon}(p)\frac{|\nabla_p f +p(f+\kappa f^2)|^2}{M}dpdxdt\nonumber\\
&+C\int_0^T\int_{\mathbb{T}^d}\int_{\mathbb{R}^d}\big|\nabla_p \zeta_{\epsilon}(p)\cdot (\nabla_p f +p(f+\kappa f^2))\big|dpdxdt.
\end{align} 

\textbf{Step 2, $\varepsilon\to 0$:} Next we let $\epsilon\to 0.$ First,  \eqref{gr.z} shows $\nabla_p \zeta_{\epsilon}(p)\cdot (\nabla_p f +p(f+\kappa f^2))$ converges pointwise a.e. to zero as $\epsilon\to 0.$ Also, $|\nabla_p \zeta_{\epsilon}(p)|$ and   $\nabla_p \zeta_{\epsilon}(p)\cdot p$  are bounded for $p \in \mathbb{R}^d,$ uniformly for $\varepsilon\in (0,1].$ Hence, the Lebesgue dominated convergence theorem  yields
\begin{equation}\label{lim1}\lim_{\epsilon\to 0}\int_0^T\int_{\mathbb{T}^d}\int_{\mathbb{R}^d}\big|\nabla_p \zeta_{\epsilon}(p)\cdot (\nabla_p f +p(f+\kappa f^2))\big| dpdxdt=0.
\end{equation}
 $\zeta_{\epsilon}(p)\left(f \ln{\frac{f}{f_{\infty}}}-\kappa(1+\kappa f)\ln{\frac{1+\kappa f}{1+\kappa f_{\infty}}}\right)$ is positive (see the proof Lemma 4.2), monotone increasing with respect to $\epsilon,$ and converges to $f \ln{\frac{f}{f_{\infty}}}-\kappa(1+\kappa f)\ln{\frac{1+\kappa f}{1+\kappa f_{\infty}}}$ pointwise as $\epsilon\to 0.$ Hence,  the monotone convergence  theorem provides 
\begin{equation}\label{lim2}\lim_{\epsilon\to 0}(\mathrm{H}_{\epsilon}[f(T)|f_{\infty}]-\mathrm{H}_{\epsilon}[f_0|f_{\infty}])=\mathrm{H}[f(T)|f_{\infty}]-\mathrm{H}[f_0|f_{\infty}].
\end{equation}
\eqref{lim0}, \eqref{lim1}, and \eqref{lim2} yield that 
$$\displaystyle \int_0^T\int_{\mathbb{T}^d}\int_{\mathbb{R}^d}\zeta_{\epsilon}(p)\frac{|\nabla_p f +p(f+\kappa f^2)|^2}{M}dpdxdt$$ 
is uniformly bounded with respect to $\epsilon.$ Moreover, the integrand is an increasing function of $\epsilon$ and converges pointwise to 
$$\displaystyle \frac{|\nabla_p f +p(f+\kappa f^2)|^2}{M}$$ 
as $\epsilon\to 0.$ Therefore, the monotone convergence theorem shows that 
\begin{equation}\label{lim3}\lim_{\epsilon\to 0} \int_0^T\int_{\mathbb{T}^d}\int_{\mathbb{R}^d}\zeta_{\epsilon}(p)\frac{|\nabla_p f +p(f+\kappa f^2)|^2}{M}=\displaystyle \int_0^T\int_{\mathbb{T}^d}\int_{\mathbb{R}^d}\frac{|\nabla_p f +p(f+\kappa f^2)|^2}{M}dpdxdt,
\end{equation}
and the finiteness of the the right hand side follows from the $\epsilon-$uniform boundedness of the left hand side.
\eqref{lim0}, \eqref{lim1},\eqref{lim2}, and \eqref{lim3} show
\begin{equation}\label{lim4}\mathrm{H}[f(T)|f_{\infty}]-\mathrm{H}[f_0|f_{\infty}]\leq-C' \int_0^T\int_{\mathbb{T}^d}\int_{\mathbb{R}^d}\frac{|\nabla_p f +p(f+\kappa f^2)|^2}{M}dpdxdt. \end{equation}
The generalized log-Sobolev inequality (see \cite[Page 152]{1D} and \cite[Pages 2228-2229]{FDFP}) provides that there is $C''>0$ such that
 $$\mathrm{H}[f|\Pi f]\leq \frac{1}{C''}\int_{\mathbb{T}^d}\int_{\mathbb{R}^d}\frac{|\nabla_p f +p(f+\kappa f^2)|^2}{f+\kappa f^2}dpdx.$$   The bounds in \eqref{bound} provide  that there is $\tilde{C}>0$ such that
  $$\mathrm{H}[f|\Pi f]\leq \frac{1}{\tilde{C}}\int_{\mathbb{T}^d}\int_{\mathbb{R}^d}\frac{|\nabla_p f +p(f+\kappa f^2)|^2}{M}dpdx.$$ 
$\mathrm{H}[f|\Pi f]$ is equivalent to $||f-\Pi f||^2$ by Lemma \ref{C<H<C}. Hence, \eqref{lim4} and the last two inequalities yield 
\begin{equation}\label{lim5}\mathrm{H}[f(T)|f_{\infty}]-\mathrm{H}[f_0|f_{\infty}]\leq-\tilde{C'} \int_0^T\int_{\mathbb{T}^d}\int_{\mathbb{R}^d}\frac{(f-\Pi f)^2}{M}dpdxdt 
\end{equation}
for some $\tilde{C'}>0.$ 
\end{proof}

\subsection{Proof of Theorem \ref{main th}}
We denote the macroscopic flux
$$j(t,x)\colonequals \int_{\mathbb{R}^d}pfdp$$
and recall
$$\rho(t,x)\colonequals \int_{\mathbb{R}^d}fdp.$$
Let $\phi=\phi(t,x)$ be the solution of the Poisson equation
\begin{equation}\label{Pois}
-\Delta_x \phi=\rho-\rho_{\infty},\, \, \, \int_{\mathbb{T}^d} \phi \,dx=0, \, \, \, x\in \mathbb{T}^d, \, \, t\geq 0.
\end{equation}
 Using these functions we define a modified entropy functional
\begin{equation*}
\mathrm{E}[f|f_{\infty}]\colonequals \mathrm{H}[f|f_{\infty}]+\delta \int_{\mathbb{T}^d} \nabla_x \phi\cdot j dx,
\end{equation*}
 where $\delta>0$ will be chosen in the proof of Theorem \ref{main th}. We now show that also $\mathrm{E}[f|f_{\infty}]$ is equivalent to $||f-f_{\infty}||^2.$ 
\begin{lemma}   Let $f$ satisfy \eqref{bound}.
Then, if $\delta>0$ is small enough, there are constants $C_6>0$ and $C_7>0$ (depending only on $\beta_-$ and $\beta_+$) such that \begin{equation}\label{E equiv}C_6||f-f_{\infty}||^2\leq \mathrm{E}[f|f_{\infty}]\leq C_7 ||f-f_{\infty}||^2.
\end{equation}
\end{lemma}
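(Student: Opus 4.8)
The plan is to reduce the claim to the already-established equivalence of the relative entropy with the norm (Lemma \ref{C<H<C}) plus a smallness argument for the perturbation term. By Lemma \ref{C<H<C} we have $C_3\|f-f_\infty\|^2\le \mathrm{H}[f|f_\infty]\le C_4\|f-f_\infty\|^2$, so it suffices to show that the additional term in $\mathrm{E}[f|f_\infty]$ obeys a bound of the form
$$\left|\int_{\mathbb{T}^d}\nabla_x\phi\cdot j\,dx\right|\le C\,\|f-f_\infty\|^2,$$
with a constant $C$ that is independent of $\delta$ and of $f$ (it will depend only on the dimension and on fixed Gaussian moments). Once this is in hand, the triangle inequality gives $(C_3-\delta C)\|f-f_\infty\|^2\le \mathrm{E}[f|f_\infty]\le (C_4+\delta C)\|f-f_\infty\|^2$, and choosing $\delta$ small enough forces the lower constant to stay positive.

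First I would control the macroscopic flux and density in terms of $\|f-f_\infty\|$. Since $M$ is even in $p$, we have $\int_{\mathbb{R}^d}p\,f_\infty\,dp=0$, hence $j=\int_{\mathbb{R}^d}p(f-f_\infty)\,dp$. Writing $p(f-f_\infty)=(p\sqrt{M})\cdot\frac{f-f_\infty}{\sqrt{M}}$ and applying the Cauchy--Schwarz inequality in $p$ together with the finiteness of $\int_{\mathbb{R}^d}|p|^2M\,dp$ yields
$$|j(t,x)|^2\le\left(\int_{\mathbb{R}^d}|p|^2M\,dp\right)\int_{\mathbb{R}^d}\frac{(f-f_\infty)^2}{M}\,dp,$$
and integrating over $x$ gives $\|j\|^2_{L^2(\mathbb{T}^d)}\le C\,\|f-f_\infty\|^2$. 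The same argument with $\int_{\mathbb{R}^d}M\,dp=1$ gives $|\rho-\rho_\infty|^2\le\int_{\mathbb{R}^d}\frac{(f-f_\infty)^2}{M}\,dp$, so that $\|\rho-\rho_\infty\|^2_{L^2(\mathbb{T}^d)}\le\|f-f_\infty\|^2$.

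Next I would estimate $\nabla_x\phi$ by elliptic duality. Testing the Poisson equation \eqref{Pois} against $\phi$ and integrating by parts on $\mathbb{T}^d$ gives $\int_{\mathbb{T}^d}|\nabla_x\phi|^2\,dx=\int_{\mathbb{T}^d}(\rho-\rho_\infty)\phi\,dx$; since $\int_{\mathbb{T}^d}\phi\,dx=0$, the Poincar\'e inequality $\|\phi\|_{L^2(\mathbb{T}^d)}\le C_P\|\nabla_x\phi\|_{L^2(\mathbb{T}^d)}$ and Cauchy--Schwarz then yield $\|\nabla_x\phi\|_{L^2(\mathbb{T}^d)}\le C_P\|\rho-\rho_\infty\|_{L^2(\mathbb{T}^d)}\le C_P\|f-f_\infty\|$. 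Combining this with the flux bound via Cauchy--Schwarz in $x$,
$$\left|\int_{\mathbb{T}^d}\nabla_x\phi\cdot j\,dx\right|\le\|\nabla_x\phi\|_{L^2(\mathbb{T}^d)}\,\|j\|_{L^2(\mathbb{T}^d)}\le C\,\|f-f_\infty\|^2,$$
which is exactly the perturbation bound required above.

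Finally, inserting this into the decomposition of $\mathrm{E}[f|f_\infty]$ and using Lemma \ref{C<H<C} gives \eqref{E equiv} with $C_7=C_4+\delta C$ and, upon choosing for instance $\delta=C_3/(2C)$, with $C_6=C_3/2>0$. I do not expect a genuine obstacle here: the only point requiring care is that the cross-term constant $C$ be genuinely independent of $\delta$ and of $f$, so that the smallness threshold for $\delta$ can be fixed once and for all. This holds because $C$ is built only from the fixed moments $\int_{\mathbb{R}^d}|p|^2M\,dp$ and $\int_{\mathbb{R}^d}M\,dp$ and the Poincar\'e constant of $\mathbb{T}^d$, while the dependence on $\beta_-,\beta_+$ enters solely through $C_3,C_4$ of Lemma \ref{C<H<C}, so the resulting $C_6,C_7$ depend only on $\beta_-$ and $\beta_+$ as claimed.
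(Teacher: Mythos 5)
Your proposal is correct and follows essentially the same route as the paper: bound the cross term by Cauchy--Schwarz as $\left|\int_{\mathbb{T}^d}\nabla_x\phi\cdot j\,dx\right|\leq \|\nabla_x\phi\|_{L^2(\mathbb{T}^d)}\|j\|_{L^2(\mathbb{T}^d)}$, control $\|j\|_{L^2(\mathbb{T}^d)}$ and $\|\rho-\rho_\infty\|_{L^2(\mathbb{T}^d)}$ by $\|f-f_\infty\|$ via Gaussian moments, control $\|\nabla_x\phi\|_{L^2(\mathbb{T}^d)}$ by $\|\rho-\rho_\infty\|_{L^2(\mathbb{T}^d)}$, and then absorb the $\delta$-term into the entropy equivalence of Lemma \ref{C<H<C}. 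The only cosmetic difference is that you justify the elliptic bound by testing the Poisson equation against $\phi$ and invoking Poincar\'e, where the paper simply cites elliptic regularity.
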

\begin{proof} We estimate  
$$ \left|\int_{\mathbb{T}^d} \nabla_x \phi\cdot j dx\right|\leq ||\nabla_x \phi||_{L^2(\mathbb{T}^d)}||j||_{L^2(\mathbb{T}^d)}.$$
Because of the elliptic regularity $||\nabla_x \phi||_{L^2(\mathbb{T}^d)}$ is bounded (up to a constant) by $||\rho-\rho_{\infty}||_{L^2(\mathbb{T}^d)}.$ Moreover, the H\"older inequality provides $||\rho-\rho_{\infty}||_{L^2(\mathbb{T}^d)}\leq ||f-f_{\infty}||.$  
We use $\displaystyle \int_{\mathbb{R}^d }pf_{\infty}dp=0$ and the H\"older inequality to estimate 
\begin{align*}
||j(t)||_{L^2(\mathbb{T}^d)}&=\sqrt{\int_{\mathbb{T}^d}\left|\int_{\mathbb{R}^d}p(f-f_{\infty})dp \right|^2dx}\\
&\leq \sqrt{\int_{\mathbb{T}^d}\left(\int_{\mathbb{R}^d}|p|^2M(p)dp \right)\left(\int_{\mathbb{R}^d}\frac{(f-f_{\infty})^2}{M}dp\right) dx}\\
&\leq \sqrt{d}||f-f_{\infty}||. 
\end{align*}
These estimates show that there is a positive constant $C$ such that
$$ \left|\int_{\mathbb{T}^d} \nabla_x \phi\cdot j dx\right|\leq C||f-f_{\infty}||^2.$$
This estimate and \eqref{C_1<C_2} show
$$(C_1-\delta C)||f-f_{\infty}||^2\leq \mathrm{E}[f|f_{\infty}]\leq (C_2+\delta C)||f-f_{\infty}||^2.$$
This shows that, if $\delta$ is small enough, $\mathrm{E}[f|f_{\infty}]$ is equivalent to $||f-f_{\infty}||^2.$
\end{proof}

We will need the following general functional analysis fact whose  proof can be found, for instance, in \cite[Lemma 1.2, page 260]{Temam}.
  \begin{lemma}[\bf{Lions-Magenes lemma}]\label{le:partial_t rho}
  Let $\mathcal{V},\mathcal{H},\mathcal{V}'$  be three Hilbert spaces, each space included in the following one with dense and bounded embedding, $\mathcal{V}'$ being the dual of $\mathcal{V}$.   Then, for $T>0$, 
  the following inclusion 
  $$
  L^2([0,T],\mathcal{V})\cap H^1([0,T],\mathcal{V}')\subset C([0,T],\mathcal{H})
  $$
  holds true. Moreover, for any $g \in L^2([0,T],\mathcal{V})\cap H^1([0,T],\mathcal{V}')$ there holds 
  $$
  t\rightarrow ||g(t)||^2_{\mathcal{H}} \in W^{1,1}(0,T)
  $$
  and
  \begin{equation*}
  \frac{d}{dt} ||g(t)||^2_{\mathcal{H}}=2\langle g'(t),g(t) \rangle_{\mathcal{V}',\mathcal{V}}  \,\,\,\, a.e. \text{    on   }   (0,T),
  \end{equation*}
  where $||\cdot||_\mathcal{H}$ denotes the norm in $\mathcal{H},$ and $\langle \cdot, \cdot \rangle_{\mathcal{V}',\mathcal{V}}$ denotes the action of $\mathcal{V}'$ on $\mathcal{V}.$
  \end{lemma}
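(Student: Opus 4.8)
This is the classical Lions--Magenes lemma for the Gelfand triple $\mathcal{V}\hookrightarrow \mathcal{H}\cong \mathcal{H}'\hookrightarrow \mathcal{V}'$, and the plan is the standard one: establish the differentiation identity first for time-smooth functions, then extend to the general case by density, using the identity itself to produce the claimed continuity. First I would record the compatibility of the pairing with the Hilbert structures. Since $\mathcal{H}$ is identified with its dual via Riesz and $\mathcal{V}$ embeds densely in $\mathcal{H}$, for every $u\in \mathcal{H}$ and $v\in \mathcal{V}$ one has $\langle u,v\rangle_{\mathcal{V}',\mathcal{V}}=(u,v)_{\mathcal{H}}$. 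Consequently, for any $g\in C^1([0,T],\mathcal{V})$ the scalar function $t\mapsto \|g(t)\|_{\mathcal{H}}^2$ is $C^1$, and a direct computation gives $\frac{d}{dt}\|g(t)\|_{\mathcal{H}}^2=2(g'(t),g(t))_{\mathcal{H}}=2\langle g'(t),g(t)\rangle_{\mathcal{V}',\mathcal{V}}$, which is the asserted identity in the smooth case. Integrating between two times yields $\|g(t)\|_{\mathcal{H}}^2=\|g(s)\|_{\mathcal{H}}^2+2\int_s^t\langle g'(\tau),g(\tau)\rangle_{\mathcal{V}',\mathcal{V}}\,d\tau$.

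The core of the argument is to approximate a general element of $W\colonequals L^2([0,T],\mathcal{V})\cap H^1([0,T],\mathcal{V}')$, equipped with the norm $\|g\|_W^2=\|g\|_{L^2(\mathcal{V})}^2+\|g'\|_{L^2(\mathcal{V}')}^2$, by functions of the previous type. I would first extend $g$ to a function $\tilde g$ on all of $\mathbb{R}$ lying in $W$ locally, for instance by reflection across the endpoints $t=0$ and $t=T$ followed by a smooth cutoff, and then mollify in time, setting $g_n\colonequals \tilde g *\rho_n$ for a standard mollifier $\rho_n$. Each $g_n$ is smooth in $t$ with values in $\mathcal{V}$; moreover $g_n\to g$ in $L^2([0,T],\mathcal{V})$ and $g_n'=\tilde g'*\rho_n\to g'$ in $L^2([0,T],\mathcal{V}')$ as $n\to\infty$.

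Applying the smooth identity to the difference $g_n-g_m$ gives, for all $s,t$, the relation $\|g_n(t)-g_m(t)\|_{\mathcal{H}}^2=\|g_n(s)-g_m(s)\|_{\mathcal{H}}^2+2\int_s^t\langle g_n'-g_m',\,g_n-g_m\rangle_{\mathcal{V}',\mathcal{V}}\,d\tau$. Averaging over $s\in[0,T]$ bounds the first term on the right by $\frac1T\|g_n-g_m\|_{L^2([0,T],\mathcal{H})}^2$, and estimating the pairing via $|\langle g_n'-g_m',g_n-g_m\rangle_{\mathcal{V}',\mathcal{V}}|\leq \|g_n'-g_m'\|_{\mathcal{V}'}\|g_n-g_m\|_{\mathcal{V}}$ together with Cauchy--Schwarz in $\tau$ shows that $(g_n)$ is Cauchy in $C([0,T],\mathcal{H})$. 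Its uniform limit is a continuous $\mathcal{H}$-valued representative of $g$, which proves the inclusion $W\subset C([0,T],\mathcal{H})$. Passing to the limit $n\to\infty$ in the integrated smooth identity (the left side converging uniformly in $t$, the right side by the $L^2(\mathcal{V})$ and $L^2(\mathcal{V}')$ convergences) yields $\|g(t)\|_{\mathcal{H}}^2=\|g(s)\|_{\mathcal{H}}^2+2\int_s^t\langle g'(\tau),g(\tau)\rangle_{\mathcal{V}',\mathcal{V}}\,d\tau$. Since $\tau\mapsto \langle g'(\tau),g(\tau)\rangle_{\mathcal{V}',\mathcal{V}}$ belongs to $L^1(0,T)$, this exhibits $t\mapsto \|g(t)\|_{\mathcal{H}}^2$ as an absolutely continuous function, hence in $W^{1,1}(0,T)$, with the stated derivative a.e.

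The main obstacle is the extension-and-mollification step: one must prolong $g$ past the endpoints in a way that \emph{simultaneously} keeps $g\in L^2(\mathcal{V})$ and $g'\in L^2(\mathcal{V}')$, since a naive extension by zero would create a Dirac mass in the distributional derivative. A reflection extension is clean because it preserves both integrability classes at once; alternatively one can mollify on interior subintervals and exhaust $[0,T]$, deferring the endpoint behaviour to the Cauchy argument in $C([0,T],\mathcal{H})$. Everything else reduces to routine functional analysis once the Gelfand-triple compatibility $\langle u,v\rangle_{\mathcal{V}',\mathcal{V}}=(u,v)_{\mathcal{H}}$ is in place.
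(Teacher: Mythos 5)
Your proof is correct. The paper does not prove this lemma itself --- it simply cites Temam \cite[Lemma 1.2, page 260]{Temam} --- and your argument (the Gelfand-triple compatibility $\langle u,v\rangle_{\mathcal{V}',\mathcal{V}}=(u,v)_{\mathcal{H}}$, reflection-extension and time-mollification, the Cauchy estimate in $C([0,T],\mathcal{H})$ obtained by averaging the integrated identity over $s$, and passage to the limit to get absolute continuity of $t\mapsto \|g(t)\|^2_{\mathcal{H}}$) is essentially the standard proof given in that reference, so there is nothing to correct and no genuinely different route to compare.
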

   One of the application of this lemma is that if $g_1\in L^2([0,T],\mathcal{H})\cap H^1([0,T],\mathcal{V}')$ and $g_2 \in L^2([0,T],\mathcal{V})\cap H^1([0,T], \mathcal{H}),$ then 
  $$\frac{d}{dt}\langle g_1(t),g_2(t)\rangle_{\mathcal{H}}=\langle g'_1(t),g_2(t) \rangle_{\mathcal{V}',\mathcal{V}} +\langle g_1(t),g'_2(t) \rangle_{\mathcal{H}}, $$
  where $\langle \cdot , \cdot \rangle_{\mathcal{H}}$ denotes the scalar product in $\mathcal{H}.$ For more details we refer to \cite{LM, Temam}. 
  In the following we use this equation for  the case $\mathcal{V}\colonequals H^1(\mathbb{T}^d)$ and $\mathcal{H}\colonequals L^2(\mathbb{T}^d).$

\begin{proof}[\textbf{Proof of Theorem \ref{main th}}]
We consider the modified entropy functional \begin{equation*}
\mathrm{E}[f(t)|f_{\infty}]=\mathrm{H}[f(t)|f_{\infty}]+\delta \int_{\mathbb{T}^d} \nabla_x \phi\cdot j dx.
\end{equation*}
We have estimated $\mathrm{H}[f(t)|f_{\infty}]$ in \eqref{dt H<-C_5}. We want to get a similar estimate for  $\int_{\mathbb{T}^d} \nabla_x \phi\cdot j dx.$  To do that we consider its time derivative $\frac{d}{dt}\int_{\mathbb{T}^d} \nabla_x \phi\cdot j dx.$ Multiplying \eqref{Eq} by $p,$  then integrating formally with respect to $p,$ we obtain
\begin{equation}\label{dt j -1}
\partial_t j=-\mathrm{div}_x\int_{\mathbb{R}^d}p\otimes p(f+\kappa f^2)dp-\int_{\mathbb{R}^d}p(f+\kappa f^2)dp,
\end{equation}
where $\otimes$ denotes the Kronecker product.
\eqref{bound} implies that the right hand side of this equation is in $H^{-1}(\mathbb{T}^d)$ for all $t\geq 0.$ 
By formally integrating \eqref{Eq} with respect to $p,$  we obtain
\begin{equation}\label{dt rho}
\partial_t \rho=-\mathrm{div}_x\left(\int_{\mathbb{R}^d} p(f+\kappa f^2)dp\right).
\end{equation}
\eqref{bound} implies that $\partial_t \rho\in H^{-1}(\mathbb{T}^d)$ for all $t\geq 0.$
  Also, by differentiating \eqref{Pois} with respect to $t>0$, we get 
  $\partial_{t}(\Delta_x \phi)=-\partial_t \rho\in H^{-1}(\mathbb{T}^d) $, and so  $\partial_t   \phi\in H^1(\mathbb{T}^d)$ for all $t\geq 0.$   Therefore, we have 
\begin{equation}\label{1/4} \frac{d}{dt}\int_{\mathbb{T}^d} j\cdot \nabla_x \phi dx=\langle \partial_t j,\nabla_x \phi\rangle_{H^{-1}, H^1}+\int_{\mathbb{T}^d} \partial_{t} \nabla_x \phi \cdot j dx.
\end{equation}
We estimate  this time derivative  in the following two steps.\\
\textbf{Step 1, Estimates of $\displaystyle \int_{\mathbb{T}^d} \partial_t\nabla_x \phi\cdot j dx$ : }
The H\"older inequality implies  
$$\left|\int_{\mathbb{T}^d} \partial_t\nabla_x \phi\cdot j dx\right|\leq ||\partial_t \nabla_x \phi||_{L^2(\mathbb{T}^d)} ||j||_{L^2(\mathbb{T}^d)}. $$
We use \eqref{pP=0} and the H\"older inequality to obtain 
$$||j(t)||^2_{L^2(\mathbb{T}^d)}=\int_{\mathbb{T}^d}\left|\int_{\mathbb{R}^d}p(f-\Pi f)dp \right|^2dx\leq d\int_{\mathbb{T}^d}\int_{\mathbb{R}^d}\frac{(f-\Pi f)^2}{M}dp dx. $$
Using  $\partial_t \Delta_x \phi=-\partial_t \rho\in H^{-1}(\mathbb{T}^d)$ and \eqref{dt rho}, we compute
\begin{align*}
||\partial_t \nabla_x \phi(t)||_{L^2(\mathbb{T}^d)}^2&=-\langle \partial_t \Delta_x \phi, \partial_t  \phi  \rangle_{H^{-1},H^1}\\
&=-\Big \langle \text{div}_x\left(\int_{\mathbb{R}^d} p(f+\kappa f^2)dp\right), \partial_t  \phi\Big\rangle_{H^{-1},H^1} \\
&=\int_{\mathbb{T}^d}\partial_t  \nabla_x \phi \cdot \left(\int_{\mathbb{R}^d} p(f+\kappa f^2)dp\right)dx. 
\end{align*}
The H\"older inequality implies
$$||\partial_t \nabla_x \phi(t)||_{L^2(\mathbb{T}^d)}\leq \sqrt{ \int_{\mathbb{T}^d}\left|\int_{\mathbb{R}^d} p(f+\kappa f^2)dp\right|^2dx}. $$ 
 We use \eqref{vP=0} and the H\"older inequality to obtain 
\begin{align}\label{bef50} \int_{\mathbb{T}^d}\left|\int_{\mathbb{R}^d} p(f+\kappa f^2)dp\right|^2dx &=\int_{\mathbb{T}^d}\left|\int_{\mathbb{R}^d}p (f-\Pi f)(1+\kappa f+\kappa \Pi f)dp \right|^2dx \nonumber \\
&\leq d||1+\kappa f+\kappa \Pi f||^2_{L^{\infty}(\mathbb{T}^d\times \mathbb{R}^d)}\int_{\mathbb{T}^d}\int_{\mathbb{R}^d}\frac{(f-\Pi f)^2}{M}dp dx. 
\end{align}
$||1+\kappa f+\kappa \Pi f||^2_{L^{\infty}(\mathbb{T}^d\times \mathbb{R}^d)}$ can be bounded by a positive  constant depending on $\beta_-$ and $\beta_+.$ These estimates show that there is a constant $C>0$ such that  
\begin{equation}\label{tphi j}\left|\int_{\mathbb{T}^d} \partial_t\nabla_x \phi\cdot j dx\right|\leq C||f-\Pi f||^2.
\end{equation}\\
\textbf{Step 2, Estimates of $\displaystyle \langle \partial_t j, \nabla_x \phi\rangle_{H^{-1}, H^1}$ :}
 We use
\eqref{v_iv_jP} to write \eqref{dt j -1} as   
 \begin{align*}
 \partial_t j=&-\nabla_x \rho- \mathrm{div}_x\int_{\mathbb{R}^d}p \otimes p \{(f-\Pi f)(1+\kappa f+\kappa \Pi f)\}dp-\int_{\mathbb{R}^d}p(f+\kappa f^2)dp\\
 =&-\nabla_x (\rho-\rho_{\infty})- \mathrm{div}_x\int_{\mathbb{R}^d}p\otimes p \{(f-\Pi f)(1+\kappa f+\kappa \Pi f)\}dp-\int_{\mathbb{R}^d}p(f+\kappa f^2)dp.
 \end{align*}
 We use this equation to compute 
 \begin{align}\label{tj}
 \langle \partial_t j, \nabla_x \phi\rangle_{H^{-1}, H^1}=&-\langle  \nabla_x(\rho-\rho_{\infty}), \nabla_x \phi \rangle_{H^{-1},H^1}\nonumber\\
 &-\Big \langle \mathrm{div}_x\int_{\mathbb{R}^d}p\otimes p \{(f-\Pi f)(1+\kappa f+\kappa \Pi f)\}dp, \nabla_x \phi \Big \rangle_{H^{-1},H^1}\nonumber\\
 &-\int_{\mathbb{T}^d}\int_{\mathbb{R}^d}\nabla_x \phi\cdot p(f+\kappa f^2)dpdx.
 \end{align}
 For the first term we  use  \eqref{rr} to get 
 \begin{equation}\label{tj1}
 -\langle  \nabla_x(\rho-\rho_{\infty}), \nabla_x \phi \rangle_{H^{-1},H^1} =-\int_{\mathbb{T}^d}(\rho-\rho_{\infty})^2dx \leq -C_4||\Pi f-f_{\infty}||^2.
 \end{equation}
 For the second term in \eqref{tj}, we  use the H\"older inequality
 \begin{align*}
&-\Big \langle \mathrm{div}_x\int_{\mathbb{R}^d}p\otimes p \{(f-\Pi f)(1+\kappa f+\kappa \Pi f)\}dp, \nabla_x \phi \Big \rangle_{H^{-1},H^1}\\
 &=\int_{\mathbb{T}^d}\int_{\mathbb{R}^d} p^T\frac{\partial^2 \phi}{\partial x^2} p (f-\Pi f)(1+\kappa f+\kappa \Pi f)dpdx\\
 & \leq ||1+\kappa f+\kappa \Pi f||_{L^{\infty}(\mathbb{T}^d\times \mathbb{R}^d)}||f-\Pi f||\sqrt{\int_{\mathbb{T}^d}\int_{\mathbb{R}^d} |p|^2\left|\frac{\partial^2 \phi}{\partial x^2}\right|^2 Mdpdx}\\
 &=\sqrt{d}||1+\kappa f+\kappa \Pi f||_{L^{\infty}(\mathbb{T}^d\times \mathbb{R}^d)}||f-\Pi f||\left|\left|\frac{\partial^2 \phi}{\partial x^2}\right|\right|_{L^2(\mathbb{T}^d)}.
 \end{align*}
 $\left|\left|\frac{\partial^2 \phi}{\partial x^2}\right|\right|_{L^2(\mathbb{T}^d)}$ is bounded (up to a constant) by $||\rho-\rho_{\infty}||_{L^2(\mathbb{R}^d)}$ because of the elliptic regularity. \eqref{rr} implies that   $||\rho-\rho_{\infty}||_{L^2(\mathbb{T}^d)}$ is bounded (up to a constant) by $||\Pi f-f_{\infty}||.$ Moreover, $||1+\kappa f+\kappa \Pi f||_{L^{\infty}(\mathbb{T}^d\times \mathbb{R}^d)}$ is bounded by a positive  constant depending $\beta_-$ and $\beta_+.$ Hence, there is a constant $C'>0$ such that 
 \begin{equation}\label{tj2}
  -\Big \langle \mathrm{div}_x\int_{\mathbb{R}^d}p\otimes p \{(f-\Pi f)(1+\kappa f+\kappa \Pi f)\}dp, \nabla_x \phi \Big \rangle_{H^{-1},H^1}
  \leq C'  || f-\Pi f||\,||\Pi f-f_{\infty}||.
 \end{equation}
 We estimate the last term in \eqref{tj} by using the H\"older inequality and \eqref{bef50}
 \begin{align*}
 -\int_{\mathbb{T}^d}\int_{\mathbb{R}^d}\nabla_x \phi\cdot p(f+\kappa f^2)dpdx & \leq ||\nabla_x \phi||_{L^2(\mathbb{T}^d)} \sqrt{\int_{\mathbb{T}^d}\left|\int_{\mathbb{R}^d}p(f+\kappa f^2)dp\right|^2dx}\\
 &\leq \sqrt{d}||1+\kappa f+\kappa \Pi f||_{L^{\infty}(\mathbb{T}^d\times \mathbb{R}^d)}||\nabla_x \phi||_{L^2(\mathbb{T}^d)}\,||f-\Pi f||.
 \end{align*}
 Similarly, $||\nabla_x \phi||_{L^2(\mathbb{T}^d)}$  is bounded (up to a constant) by $||\rho-\rho_{\infty}||_{L^2(\mathbb{R}^d)}$ because of the elliptic regularity. As before,   $||\rho-\rho_{\infty}||_{L^2(\mathbb{T}^d)}$ is bounded (up to a constant) by $||\Pi f-f_{\infty}||$ and  $||1+\kappa f+\kappa \Pi f||^2_{L^{\infty}(\mathbb{T}^d\times \mathbb{R}^d)}$ is  bounded by a positive  constant.
 Hence, there is a constant $\tilde{C}>0$ such that 
 \begin{equation}\label{tj3}
 -\int_{\mathbb{T}^d}\int_{\mathbb{R}^d}\nabla_x \phi\cdot p(f+\kappa f^2)dpdx  \leq \tilde{C}||f- \Pi f||\,||\Pi f-f_{\infty}||.
 \end{equation}
 \eqref{tj}, \eqref{tj1}, \eqref{tj2}, and \eqref{tj3} imply 
 \begin{equation*}\langle \partial_t j, \nabla_x \phi\rangle_{H^{-1}, H^1}\leq -C_4||\Pi f-f_{\infty}||^2+(C' +\tilde{C})|| f-\Pi f||\,||\Pi f-f_{\infty}||.
 \end{equation*}
\eqref{1/4}, the above estimate,  and \eqref{tphi j} yield 
  \begin{multline}\label{phitj}\int_{\mathbb{T}^d} \nabla_x \phi(T)\cdot  j(T) dx\leq \int_{\mathbb{T}^d} \nabla_x \phi(0)\cdot  j(0) dx 
  +\int_0^T\Big[-C_4||\Pi f-f_{\infty}||^2+C || f-\Pi f||^2\\+(C'+\tilde{C}) || f-\Pi f||\,||\Pi f-f_{\infty}||\Big]dt
 \end{multline}
 for all $T>0.$\\
 \textbf{Step 3, Gr\"onwall's inequality:} We now summarize all estimates.  \eqref{dt H<-C_5} and \eqref{phitj} yield
 \begin{align*}\mathrm{E}[f(T)|f_{\infty}]- \mathrm{E}[f(0)|f_{\infty}]\leq &-\int_0^T\Big[(C_5-\delta C)||f-\Pi f||^2 \\&+\delta C_4||\Pi f-f_{\infty}||^2
 -\delta (C'+\tilde{C}) || f-\Pi f||\,||\Pi f-f_{\infty}||\Big]dt.
 \end{align*}
 One can check that if $\delta>0$ is small enough, there is a constant $C''>0$ such that 
 $$\mathrm{E}[f(T)|f_{\infty}]-\mathrm{E}[f(0)|f_{\infty}]\leq -2C''\int_0^T(||f-\Pi f||^2+||\Pi f-f_{\infty}||^2)dt\leq  -C''\int_0^T||f-f_{\infty}||^2dt.$$
 We choose $\delta>0$ so that this inequality and \eqref{E equiv} hold.
 The first inequality in \eqref{E equiv} implies
 $$\mathrm{E}[f(T)|f_{\infty}]- \mathrm{E}[f(0)|f_{\infty}]\leq -\frac{C''}{C_6}\int^T_0\mathrm{E}[f(t)|f_{\infty}]dt.$$
 Since this inequality holds for all $T>0$, Gr\"onwall's inequality implies 
 $$\mathrm{E}[f(t)|f_{\infty}]\leq e^{-\frac{C''}{C_6}t}\mathrm{E}[f_0|f_{\infty}], \, \, \, \forall\, t\geq 0.$$
 This inequality and \eqref{E equiv} provide \eqref{lamda} with $c\colonequals \sqrt{\frac{C_7}{C_6}}$ and $2\lambda\colonequals \frac{C''}{C_6}.$
\end{proof}

\bigskip

\textbf{Acknowledgement.}
Anton Arnold was funded in part by the Austrian Science Fund (FWF)
project 10.55776/F65. 
 Marlies Pirner and Gayrat Toshpulatov  were funded  by the Deutsche Forschungsgemeinschaft (DFG, German Research Foundation) under Germany's Excellence Strategy EXC 2044/2--390685587, Mathematics Münster: Dynamics--Geometry--Structure. In addition, Marlies Pirner was funded by the German Science Foundation DFG (grant no. PI 1501/2-1).
 For open-access purposes, the authors have applied a CC BY public copyright license to any author-accepted manuscript version arising from this submission.
 \bigskip
 
\textbf{Data Availability.} There is no data associated with the paper.
 \bigskip

\textbf{Conflict of interest.} The authors have no conflict of interest to declare.



{}


\begin{thebibliography}{}

\bibitem{An} F.\,Achleitner, A.\,Arnold, V.\,Mehrmann, E.\,A.\,Nigsch. Hypocoercivity in Hilbert spaces. \textit{Journal of Functional Analysis,} 288(2), 2025.

\bibitem{Wig2} A.\,Arnold, F.\,Fagnola, L.\,Neumann. Quantum Fokker-Planck models: The Lindblad and Wigner approaches. \textit{Quantum Probability and Related Topics,} 23--48, 2008.

\bibitem{Wig1}A. Arnold, J. L. L\'opez, P. A. Markowich, J. Soler. An Analysis of Quantum Fokker-Planck Models: A Wigner Function Approach, \textit{Rev. Mat. Iberoam.} 20, 3, 771--814, 2004.


\bibitem{EED}A.\,Arnold,  P.\,Markowich,  G.\,Toscani, A.\,Unterreiter. On convex Sobolev inequalities and the rate of convergence to equilibrium for Fokker-Planck type equations. \textit{Communications in Partial Differential Equations,} 26(1--2), 43--100, 2001.

\bibitem{AT1} A.\,Arnold, G.\,Toshpulatov. Exponential stability and hypoelliptic regularization for the kinetic Fokker–Planck equation with confining potential. \textit{Journal of Statistical Physics,} 191(5), 51, 2024.

\bibitem{AT2} A.\,Arnold, G.\,Toshpulatov. Trend to equilibrium and hypoelliptic regularity for the relativistic Fokker-Planck equation. \textit{ SIAM Journal on Mathematical Analysis,} 57(3), 2911--2951, 2025.

\bibitem{Der} N.\,L.\,Balazs. The Fokker-Planck and Kramers-Chandrasekhar equations for semiclassical bosons and fermions. \textit{Physica A: Statistical Mechanics and its Applications,} 94(3--4), 474--480, 1978.

\bibitem{Max} A.\,Bernou, K.\,Carrapatoso,  S.\,Mischler, I.\,Tristani.  Hypocoercivity for kinetic linear equations in bounded domains with general Maxwell boundary condition. \textit{Annales de l'Institut Henri Poincar\'e C,} 40(2), 287--338, 2022.

\bibitem{Brezis} H.\,Brezis. \textit{Functional Analysis, Sobolev Spaces and Partial Differential Equations.} Springer Science \& Business Media, 2010.

\bibitem{Har} J.\,A.\,Ca\~nizo,  C.\,Cao, J.\,Evans,  H.\,Yoldaş. Hypocoercivity of linear kinetic equations via Harris's theorem. Kinetic and Related Models, 13(1), 97--128, 2020.


\bibitem{2D} J.\,A.\,Ca\~nizo, J.\,A.\,Carrillo, P.\,Laurençot,  J.\,Rosado. The Fokker–Planck equation for bosons in 2d: Well-posedness and asymptotic behavior. \textit{Nonlinear Analysis,} 137, 291--305, 2016.



\bibitem{L2} K.\,Carrapatoso, J.\,Dolbeault, F.\,H\'erau, S.\,Mischler, C.\,Mouhot, C.\,Schmeiser. Special macroscopic modes and hypocoercivity. \textit{J. Eur. Math. Soc.,}  doi 10.4171/JEMS/1502, 2024.

\bibitem{Num} J.\,A.\,Carrillo, K.\,Hopf, M.\,T.\,Wolfram. Numerical study of Bose–Einstein condensation in the Kaniadakis–Quarati model for bosons. \textit{Kinetic \& Related Models,} 13(3), 2020.

\bibitem{EEDn}
J.\,A.\,Carrillo, A.\,J\"ungel, P.\,Markowich, G.\,Toscani, A.\,Unterreiter. Entropy dissipation methods for degenerate parabolic problems and generalized Sobolev inequalities. \textit{Monatsh. Math.,} 133, 1--82, 2001.

\bibitem{FDFP} J.\,A.\,Carrillo, P.\,Laurencot, J.\,Rosado. Fermi–Dirac–Fokker–Planck equation: well-posedness \& long-time asymptotics. \textit{J.\,Differential Equations,} 247(8), 2209--2234, 2009.

\bibitem{1D} J.\,A.\,Carrillo, J.\,Rosado, F.\,Salvarani. $1D$ nonlinear Fokker–Planck equations for fermions and bosons. \textit{Appl. Math. Lett.,} 21, 148--154, 2008.

\bibitem{Cessenat-Despret} O.\,Cessenat, B.\,Despres. Application of an ultra weak variational formulation of elliptic PDEs to the two-dimensional Helmholtz problem. \textit{SIAM journal on numerical analysis,} 35(1), 255-299, 1998.

\bibitem{Book} S.\,Chapman, T.\,G.\,Cowling. \textit{The mathematical theory of non-uniform gases.} 3rd ed., Cambridge Univ. Press, 1970.

\bibitem{Dolb} J.\,Dolbeault. Kinetic models and quantum effects: a modified Boltzmann equation for Fermi-Dirac particles. \textit{ Arch. Rational Mech. Anal.,} 127(2), 101--131, 1994.

\bibitem{DMS} J. Dolbeault, C. Mouhot, C. Schmeiser. Hypocoercivity for linear kinetic equations conserving mass. \textit{Transactions of the American Mathematical Society,} 367(6), 3807--3828, 2015.


\bibitem{E.M1}M.\,Escobedo, S.\,Mischler. On a quantum Boltzmann equation for a gas of photons. \textit{J. Math. Pures Appl.} 80(5) 471–515, 2001.

\bibitem{E.M.Vel} M.\,Escobedo, S.\,Mischler,  J.\,J.\,L.\,Velazquez. Asymptotic description of Dirac mass formation in kinetic equations for quantum particles. \textit{J. Diff. Equ.,} 202(2), 208--230, 2004.

\bibitem{FPS} G.\,Favre, M.\,Pirner, C.\,Schmeiser. Hypocoercivity and reaction-diffusion limit for a non-linear generation-recombination model. \textit{Archive for Rational Mechanics and Analysis,}  247(72), 2023.

\bibitem{Foll} G.\,B.\,Folland. \textit{Real analysis: modern techniques and their applications,} Vol.\,40, John Wiley \& Sons, 1999.


\bibitem{Guo} Y.\,Guo. The Vlasov-Poisson-Boltzmann system near Maxwellians. \textit{Comm. Pure Appl. Math.}
55(9), 1104--1135, 2002.

\bibitem{GuoL2} Y.\,Guo. Decay and continuity of the Boltzmann equation in bounded domains. \textit{Arch. Ration.
Mech. Anal.,} 197(3), 713--809, 2010.   



\bibitem{Herau}{F.\,H\'erau.} {Hypocoercivity and exponential time decay for the linear inhomogeneous relaxation {B}oltzmann equation},
\newblock \emph{Asymptot. Anal.}, 46,  349--359, 2006.



\bibitem{Hopf} K.\,Hopf. Singularities in $ L^ 1$-supercritical Fokker–Planck equations: A qualitative analysis. \textit{Annales de l'Institut Henri Poincar\'e C,} 41(2), 357--403, 2023.

\bibitem{JMH}
R.\,Jasiak, G.\,Manfredi,  P.\,A.\,Hervieux.
Electron thermalization and quantum decoherence in metal nanostructures.
\textit{Phys. Rev. B}, 81, 241401(R), 2010.


\bibitem{KQ1} G.\,Kaniadakis,  P.\,Quarati. Kinetic equation for classical particles obeying an exclusion principle. \textit{Phys. Rev.\,E,} 48, 4263--4270, 1993.

\bibitem{KQ2}G.\,Kaniadakis,  P.\,Quarati. Classical model of bosons and fermions. \textit{Phys. Rev. E,} 49, 5103--5110, 1994.

\bibitem{K} G.\,Kaniadakis. Generalized Boltzmann equation describing the dynamics of bosons and fermions. \textit{Phys.\,Lett.\,A,} 203, 229--234, 1995.

\bibitem{LM} J.\,L.\,Lions,  E.\,Magenes. \textit{Nonhomogeneous boundary values problems and applications.} Berlin, New York: Springer-Verlag, Vol.1, 1972.

\bibitem{Chi} L.\,Luo, X.\,Zhang. Global classical solutions for quantum kinetic Fokker-Planck equations. \textit{Acta Mathematica Scientia,} 35(1), 140--156, 2015.

\bibitem{Lu.Wenn} X.\,Lu, B.\,Wennberg. On stability and strong convergence for the spatially homogeneous
Boltzmann equation for Fermi--Dirac particles. \text{Arch.\,Ration.\,Mech.\,Anal.,} 168(1), 1--34,
2003.

\bibitem{Wig3} M\'ario J.\,de Oliveira. Quantum Fokker-Planck structure of the Lindblad equation. \textit{Brazilian Journal of Physics,} 53(5), 2023.

\bibitem{Book1} P.\,A.\,Markowich, C.\,A.\, Ringhofer, C.\,Schmeiser. \textit{Semiconductor equations.} Springer Science \& Business Media, 1990.

\bibitem{NM} C.\,Mouhot, L.\,Neumann.  Quantitative perturbation study of convergence to equilibrium for collisional kinetic models in the  torus. \textit{ Nonlinearity,} 19(4), 969, 2006.


\bibitem{NS} L.\,Neumann, C.\,Sparber. Stability of steady states in kinetic Fokker-Planck equations for Bosons and Fermions. \textit{Communications in Mathematical Sciences,} 5(4), 765--777, 2007.

\bibitem{Nordh} L.\,Nordheim. On the kinetic methods in the new statistics and its application in the electron theory of
conductivity. \textit{Proc. Roy. Soc. London Ser. A,} 119(783), 689--698, 1928.


\bibitem{PT}M.\,Pirner, G.\,Toshpulatov. Hypocoercivity for the non-linear semiconductor Boltzmann equation. \textit{Preprint,  	arXiv:2411.11023,} 2024.


\bibitem{Contr} S.\,Pratt, W.\,Bauer.  Bose-enhancement and Pauli-blocking effects in transport models. \textit{Physics Letters B,} 329(4), 413--418, 1994.

\bibitem{Temam}  R.\,Temam. \textit{Navier-Stokes Equations: Theory and Numerical Analysis.} Studies in mathematics
and its applications. North-Holland, 1984.

\bibitem{To} G.\,Toscani. Finite time blow up in Kaniadakis–Quarati model of Bose–Einstein particles. \textit{Communications in Partial Differential Equations,} 37(1), 77--87, 2012.

\bibitem{G.T}
G.\,Toshpulatov. Long time behavior of discrete velocity kinetic equations. \textit{ArXiv preprint}, arXiv:2508.03646, 2025.


\bibitem{U.U} E.\,A.\,Uehling, G.\,E.\,Uhlenbeck. Transport phenomena in Einstein--Bose and Fermi--Dirac
gases. \textit{Phys. Rev.,} 43, 552--561, 1933.

\bibitem{Vhb} C.\,Villani. A review of mathematical topics in collisional kinetic theory. \textit{Handbook of mathematical fluid dynamics,} 1, 71--305, 2002.

\bibitem{V} C.\,Villani. \textit{Hypocoercivity.} Vol. 202, no. 950. American Mathematical Society, 2009.











\end{thebibliography}
\end{document}